\newtheorem{definition}{Definition}[section]
\newtheorem{remark}{Remark}[section]
\newenvironment{proof}{{\bf Proof\ }}{\QED\\}
\numberwithin{equation}{section}
\newtheorem{theorem}{Theorem}[section]
\newcommand{\QED}{\hspace*{\fill}\rule{2.5mm}{2.5mm}}
\newcommand\qed{\hfill$\sqcap\kern-7.5pt\hbox{$\sqcup$}$}
\newcommand{\beqn}{\begin{equation}}
\newcommand{\eeqn}{\end{equation}}
\newcommand{\bear}{\begin{eqnarray}}
\newcommand{\eear}{\end{eqnarray}}
\newcommand{\bean}{\begin{eqnarray*}}
\newcommand{\eean}{\end{eqnarray*}}
\begin{document}
\title{A  reaction network approach to the convergence to equilibrium of quantum Boltzmann equations for Bose gases}
\author{
\\\\Gheorghe Craciun *, Minh-Binh Tran **\\
*Department of Mathematics and Department of Biomolecular Chemistry \\
University of Wisconsin-Madison\\
email: craciun@wisc.edu\\
**Department of Mathematics\\ Southern Methodist University\\ email: minhbinht@mail.smu.edu}

\maketitle

\centerline{\it Dedicated to the  60th birthday of Professor Enrique Zuazua}
\begin{abstract} When the temperature of a trapped Bose gas is below the Bose-Einstein transition temperature and above absolute zero, the gas is composed of two distinct components: the Bose-Einstein condensate and the 
cloud of thermal excitations. The dynamics of the excitations can be  described by  quantum Boltzmann models. We establish a connection between quantum Boltzmann models and chemical reaction networks. We prove that  the  discrete differential equations for these quantum Boltzmann models converge to an equilibrium point. Moreover, this point is unique for all initial conditions that satisfy the same conservation laws. In the proof, we then employ a toric dynamical system approach, similar to the one used to prove the global attractor conjecture, to study the convergence to equilibrium of  quantum kinetic equations, derived in \cite{tran2020boltzmann,tran2020boltzmann2}.  
\end{abstract}{\bf Keywords:} quantum Boltzmann equation, dynamical systems, bosons, Bose-Einstein condensate, rate of convergence to equilibrium, global attractor conjecture, mass-action kinetics, power law
systems, biochemical networks, Petri net.
{\\{\bf MSC:} {35Q20, 45A05, 47G10, 82B40, 82B40, 37N25, 92C42, 37C10, 80A30, 92D25.}
\tableofcontents

\section{Introduction}\label{Intro}
 Several years after the invention of the Boltzmann--Nordheim equation, which is  the quantum version of the classical Boltzmann one, to describe the evolution of dilute quantum gases (cf.  \cite{Nordheim,UehlingUhlenbeck}), a renewal in the  kinetic theory of bosons has started by the pioneering work  of Kirkpatrick and Dorfman \cite{KD1,KD2}. 
This work of Kirkpatrick and Dorfman was later extended by Zaremba, Nikuni and Griffin \cite{ZarembaNikuniGriffin:1999:DOT}, in which the full coupling system of a quantum Boltzmann equation for the density function of the normal fluid/thermal cloud and a Gross--Pitaevskii equation for the wavefunction of the BEC has been introduced. In an independent work, the same model was  derived by Pomeau,  Brachet, M\'{e}tens and Rica in \cite{PomeauBrachetMetensRica}. We prefer to \cite{GriffinNikuniZaremba:BCG:2009,pomeau2019statistical} for further discussions on the topic. In the models by Zaremba, Nikuni and Griffin and  Pomeau,  Brachet, M\'{e}tens and Rica , there are two type of collisional processes. 
 \begin{itemize}
 \item The $1\leftrightarrow2$ interactions between the condensate and the excited atoms, described by the $C_{12}$ collision operator.
 \item The $C_{22}$ collision operator describes The $2\leftrightarrow2$ interactions between the excited atoms themselves, described by the $C_{22}$ collision operator. 
 \end{itemize}
A third collisional process, previously missing, was proposed by Reichl and Gust 
 \cite{ReichlGust:2012:CII,reichl2019kinetic}. This process takes into account $1{\leftrightarrow}3$ type collisions between the excitations and is described by  the collision operator ${C}_{31}$.
However, the derivation of the new collision operator ${C}_{31}$ was very complicated, since  it involves the computations of around $40000$ individual terms.  As a result, a    concise mathematical justification for the existence of the missing collision operator ${C}_{31}$ had been open for many years, and has been solved only until recently in \cite{tran2020boltzmann}.

 The spatial homogeneous kinetic equation for the evolution of the density function $f(t,p)$ of the thermal cloud, derived in \cite{tran2020boltzmann}[Section I],  takes the form

\begin{equation}
\label{KineticFinal}
\begin{aligned}
& \partial_t f(p) \ =  \ C_{12}[f](p) \  +  \  C_{22}[f](p)  \ + \ C_{31}[f](p),
\end{aligned}
\end{equation}
in which the forms of $C_{12}$, $C_{22}$, $C_{31}$ are given explicitly below
\begin{equation}
\begin{aligned}\label{C12Discrete}
& C_{12}[f](t,p) \ =   \  4\pi {\frac{g^2n}{V}} \sum_{p_1,p_2,p_3\ne 0}(\delta(p-p_1)-\delta(p-p_2)\\
&-\delta(p-p_3)) \\
& \times \delta(\omega(p_1)-\omega(p_2)-\omega(p_3))(K^{12}_{123})^2\delta(p_1-p_2-p_3)\\
& \times \Big[f(p_2)f(p_3)(f(p_1)+1)-f(p_1)(f(p_2)+1)(f(p_3)+1)\Big],
\end{aligned}
\end{equation}
\begin{equation}
\begin{aligned}\label{C22Discrete}
& C_{22}[f](t,p) \ =   \  \frac{g^2\pi}{V^2}  \sum_{p_1,p_2,p_3,p_4\ne 0}(\delta(p-p_1)+\delta(p-p_2)\\
& -\delta(p-p_3)-\delta(p-p_4))(K^{22}_{1234})^2\\
&\times \delta(p_1+p_2-p_3-p_4)\delta(\omega(p_1)+\omega(p_2)-\omega(p_3)-\omega(p_4))\\
&\times \Big[f(p_3)f(p_4)(f(p_2)+1)(f(p_1)+1)\\
& -f(p_1)f(p_2)(f(p_3)+1)(f(p_4)+1)\Big],
\end{aligned}
\end{equation}
and
\begin{equation}
\begin{aligned}\label{C31Discrete}
& C_{31}[f](t,p)  = \ {\frac{3g^2\pi}{V}} \sum_{p_1,p_2,p_3,p_4\ne 0}(\delta(p-p_1)-\delta(p-p_2)\\
&-\delta(p-p_3)-\delta(p-p_4))\\
&\times (K^{31}_{1234})^2\delta(p_1-p_2-p_3-p_4)\\
&\times\delta(\omega(p_1)-\omega(p_2)-\omega(p_3)-\omega(p_4))\\
&\times \Big[f(p_3)f(p_4)f(p_2)(f(p_1)+1)\\
&-f(p_1)(f(p_2)+1)(f(p_3)+1)(f(p_4)+1)\Big],
\end{aligned}
\end{equation}
in which $n$ is the density of the condensate, $t\in\mathbb{R}_+$ is the time variable, $p\in(\mathbb{Z}/L)^d\backslash\{O\}$ is the $d$-dimensional non-zero momentum variable, $V$ is proportional to the volume of the periodic box  $\left[-\frac{L}{2},\frac{L}{2}\right]^d$, $m$ is the particle mass, $\omega$ is the Bogoliubov dispersion relation defined as \begin{equation}
\label{BogDispersion}
\omega_p \ = \ \left[\frac{gn}{m}p^2 \ + \ \left(\frac{p^2}{2m}\right)^2\right]^\frac12
\end{equation} and $g$ is the interacting constant. We have normalized the Plank constant to be $1$.
In the above collision operators, the kernels are defined as follows
\begin{equation}
\label{KernelC12}\begin{aligned}
K_{1,2,3}^{1,2}  = & \ u_{p_1}u_{p_2}u_{p_3}-v_{p_1}v_{p_2}v_{p_3}-u_{p_1}u_{p_2}v_{p_3}\\
&+v_{p_1}v_{p_2}u_{p_3}-u_{p_1}v_{p_2}u_{p_3}+v_{p_1}u_{p_2}v_{p_3},\end{aligned}
\end{equation}
\begin{equation}
\label{KernelC22}\begin{aligned}
K^{2,2}_{1,2,3,4} \ = & \ u_{p_1}u_{p_2}u_{p_3}u_{p_4}+u_{p_1}v_{p_2}u_{p_3}v_{p_4}+u_{p_1}v_{p_2}v_{p_3}u_{p_4}\\
& +v_{p_1}u_{p_2}v_{p_3}u_{p_4}+v_{p_1}u_{p_2}u_{p_3}v_{p_4}+v_{p_1}v_{p_2}v_{p_3}v_{p_4},\end{aligned}
\end{equation}
and
\begin{equation}
\label{KernelC31}\begin{aligned}
K^{3,1}_{1,2,3,4} \ = & \ 2\Big[u_{p_1}u_{p_2}v_{p_3}u_{p_4}+v_{p_1}v_{p_2}u_{p_3}v_{p_4}\Big],\end{aligned}
\end{equation}
with $u_p$ and $v_p$ being defined as
\begin{equation}
\label{BogConstant}
u_p, v_p \ = \ \left(\frac{\epsilon_p +gn}{2\omega_p}\pm\frac12\right)^\frac12.
\end{equation}
In the setting of \cite{tran2020boltzmann}, we could fix $n$ as a constant, under the assumption that the thermal could fraction is quite small, in comparison to the condensate. Moreover, in the sum on the momenta $\sum_{p\ne 0}$, the origin is removed   due to  the fact that the condensate has been factored out  in the Bogoliubov diagonalization (cf. \cite{tran2020boltzmann,tran2020boltzmann2}).

\begin{remark}
	
	As it has been discussed in  \cite{tran2020boltzmann}, the BEC is in a cubic box with periodic boundary conditions, the quantum Boltzmann equation is then in the discrete form. In order for the conservations of momentum and energy to be satisfied, the following system needs to have solutions on the lattice
\begin{equation}\label{ConservationSystem}
\begin{aligned}
& p_1=p_2+p_3+p_4, \ \ \omega(p_1)=\omega(p_2)+\omega(p_3)+\omega(p_4),
\\
& p_1'+p_2'=p_3'+p_4', \ \ \omega(p_1')+\omega(p_2')=\omega(p_3')+\omega(p_4'),\\
& p_1''=p_2''+p_3'', \ \ \omega(p_1'')=\omega(p_2'')+\omega(p_3'').
\end{aligned}
\end{equation}
At the first sign, the system does have solutions due to the complicated form of the Bogoliubov dispersion relation \eqref{BogDispersion}. However, it has been pointed out in \cite{tran2020boltzmann,tran2020boltzmann2} that  when the temperature of the system is lower but closed to the Bose-Einstein condensation transition temperature, the Bogoliubov dispersion relation can be replaced by the Hatree-Fock energy ($\omega(p)\approx c|p|^2$). In this regime,  the two collision operators $C_{12}$ and $C_{22}$ dominate the collisional processes. The contribution of third collision operator $C_{31}$ becomes non-trivial when both $u_p$ and $v_p$ are large, corresponding to significantly low temperatures.  In this low temperature regime, the excitations are phonon-like and the Bogoliubov dispersion relation  \eqref{BogDispersion}    can be replaced by the phonon dispersion relation \eqref{E3a}. The replacement of  \eqref{BogDispersion}    by \eqref{E3a} guarantees the existence of solutions to \eqref{ConservationSystem}, and thus, the conservation laws are satisfied.
\end{remark}
{\bf Simplified Quantum Boltzmann model of the thermal cloud.} In our work, we try to provide a deeper understanding of the property of the system derived in \cite{tran2020boltzmann} by studying a simplified version of it. If we denote 
$$f_1=f(t,p_1), f_2=f(t,p_2), f_3=f(t,p_3), f_4=f(t,p_4),$$
then our simplified system for $f_1$ writes
\begin{eqnarray}\label{QuantumBoltzmannLinda}
\frac{\partial f_1}{\partial t}&=&C_{12}[f_1]+C_{22}[f_1]+C_{13}[f_1],\end{eqnarray}

where
\begin{eqnarray}
C_{22}[f_1]&:=&\int_{\mathbb{R}^{9}} K^{22}_{p_1, p_2, p_3,p_4}\delta(p_1+p_2-p_3-p_4)\delta(\mathcal{E}_{p_1}+\mathcal{E}_{p_2}-\mathcal{E}_{p_3}-\mathcal{E}_{p_4})\\\label{QuantumBoltzmann2}
& &\times[(1+f_1)(1+f_2)f_3f_4-f_1f_2(1+f_3)(1+f_4)]dp_2dp_3dp_4,\\\nonumber
C_{12}[f_1]&:=&\int_{\mathbb{R}^{6}} K^{12}_{p_1, p_2, p_3}\delta(p_1-p_2-p_3)\delta(\mathcal{E}_{p_1}-\mathcal{E}_{p_2}-\mathcal{E}_{p_3})\\\label{QuantumBoltzmann3}
& &\times[(1+f_1)f_2f_3-f_1(1+f_2)(1+f_3)]dp_2dp_3\\\nonumber
&&-2\int_{\mathbb{R}^{6}} K^{12}_{p_1, p_2, p_3}\delta(p_2-p_1-p_3)\delta(\mathcal{E}_{p_2}-\mathcal{E}_{p_1}-\mathcal{E}_{p_3})\\\nonumber
& &\times[(1+f_2)f_1f_3-f_2(1+f_1)(1+f_3)]dp_2dp_3,
\end{eqnarray}
{and}

\begin{equation}\label{QuantumBoltzmannLinda13}
\begin{aligned}
C_{13}[f_1]\ =&\int_{\mathbb{R}^{3\times 3}}  K^{13}_{p_1, p_2, p_3,p_4}\delta(p_1-p_2-p_3-p_4)\delta(\mathcal{E}_{p_1}-\mathcal{E}_{p_2}-\mathcal{E}_{p_3}-\mathcal{E}_{p_4})\\
&\ \times[(1+f_1)f_2f_3f_4-f_1(1+f_2)(1+f_3)(1+f_4)]dp_2dp_3dp_4\\
&\ -3\int_{\mathbb{R}^{3\times 3}} K^{13}_{p_1, p_2, p_3,p_4}\delta(p_2-p_1-p_3-p_4)\delta(\mathcal{E}_{p_2}-\mathcal{E}_{p_1}-\mathcal{E}_{p_3}-\mathcal{E}_{p_4})\\
&\ \times[(1+f_2)f_1f_3f_4-f_2(1+f_1)(1+f_3)(1+f_4)]dp_2dp_3dp_4,
\end{aligned}
\end{equation}
The quantities  $ K^{22}_{p_1, p_2, p_3,p_4},  K^{12}_{p_1, p_2, p_3}\geq 0$ are the  collision kernels, which are radially symmetric, and symmetric with respect to the permutation of $p_1$, $p_2$, $p_3,$ and $p_4$:
\begin{align*}
&~~ K^{22}_{p_1, p_2, p_3,p_4}= K^{22}_{|p_1|,|p_2|,|p_3|,|p_4|}= K^{22}_{|p_2|,|p_1|,|p_3|,|p_4|}= K^{22}_{|p_3|,|p_2|,|p_1|,|p_4|}\\
&~~= K^{22}_{|p_4|,|p_2|,|p_3|,|p_1|}= K^{22}_{|p_1|,|p_3|,|p_2|,|p_4|}=\mathcal K^{22}_{|p_1|,|p_4|,|p_3|,|p_2|}= K^{22}_{|p_1|,|p_2|,|p_4|,|p_3|},
\end{align*}
and
\begin{align*}
&~~ K^{12}_{p_1, p_2, p_3}= K^{12}_{|p_1|,|p_2|,|p_3|}= K^{12}_{|p_2|,|p_1|,|p_3|}= K^{12}_{|p_3|,|p_2|,|p_1|}= K^{12}_{|p_1|,|p_3|,|p_2|},
\end{align*}
where $|p|$ denotes the length of the vector $p$.  and $\mathcal K^{13}_{p_1, p_2, p_3,p_4}$ is  positive, radially symmetric, and symmetric with respect to the permutation of $p_1$, $p_2$, $p_3$, $p_4$ 
\begin{align*}
&~~ K^{13}_{p_1, p_2, p_3,p_4}= K^{13}_{|p_1|,|p_2|,|p_3|,|p_4|}= K^{13}_{|p_2|,|p_1|,|p_3|,|p_4|}= K^{13}_{|p_3|,|p_2|,|p_1|,|p_4|}\\
&~~= K^{13}_{|p_4|,|p_2|,|p_3|,|p_1|}= K^{13}_{|p_1|,|p_3|,|p_2|,|p_4|}=\mathcal K^{13}_{|p_1|,|p_4|,|p_3|,|p_2|}= K^{13}_{|p_1|,|p_2|,|p_4|,|p_3|}.
\end{align*}
We make a further simplification by supposing that the temperature is very low compared to the Bose-Einstein critical temperature. As a result, the energy $\mathcal{E}_p=\mathcal{E} (p)$ is given by the phonon dispersion law (cf. \cite{ReichlBook}):
\bear
&&\mathcal{E} (p)=c |p|, \,\,\,c=\sqrt{\frac {gn_c} {m}}. \label{E3a}
\eear
{\bf Reaction networks and a toric dynamical system approach for the relaxation to equilibrium problem.}
 The study of the relaxation of BECs to thermodynamic equilibrium has  played very important role in the theory of Bose gases \cite{ReichlGust:2013:TTF,ReichlGust:2013:RRA,ReichlGust:2012:CII,GriffinNikuniZaremba:2009:BCG,ZarembaNikuniGriffin:1999:DOT}. Our main tool is to convert these equations into chemical reaction systems and use an extension of the theory of toric dynamical systems (cf. \cite{MR2561288}).  
\bigskip
\\ In general, there is great interest in understanding the qualitative behavior of deterministically modeled chemical reaction systems, including the existence of positive equilibria, stability properties of equilibria, and the non-extinction, or persistence, of species, which are the constituents of these systems \cite{craciun2019polynomial,yu2018mathematical,Feinberg:1972:CBI,Feinberg:1995:TEA,Horn:1974:TDO,Anderson:2001:APG,AngeliLeenheerSontag:2011:PRF,MR3199409, MR2604624,MR2734052,MR2561288}.  Toric dynamical systems -- originally called complex-balanced systems (cf. \cite{MR2561288,HornJackson:1972:GMA}) -- are models used to describe an important class of chemical kinetics. The complex-balanced condition was first introduced by Boltzmann  \cite{Boltzmann} for modeling collisions in kinetic gas theory. Based on this condition, it was shown by Horn and Jackson \cite{HornJackson:1972:GMA,Horn:1972:NAS,Feinberglecture,Gunawardena}  that a complex-balanced system has a unique locally stable equilibrium within each linear invariant subspace.  To underline the tight connection to the algebraic study of toric varieties, the name ``toric dynamical system'' was proposed in \cite{MR2561288}. The most important problem in the theory of toric dynamical systems is the Global Attractor Conjecture, which says that the complex balanced equilibrium of a toric dynamical system is a globally attracting point within each linear invariant subspace. This global attractor question is strongly related to the convergence to equilibrium problem in the study of kinetic equations.  A proof to the Global Attractor Conjecture for small dimensional systems has been supplied in \cite{CraciunNazarovPantea:2013:PAP}, for strongly connected networks in \cite{Anderson:2001:APG}, and a complete proof has been proposed in  \cite{Craciun:2015:TDI}. 
\bigskip\\
{ Our goal is to use the tools developed in \cite{CraciunNazarovPantea:2013:PAP,Craciun:2015:TDI} to prove the relaxation to equilibrium of Discrete Velocity Models of  a  model of \eqref{QuantumBoltzmannLinda}, whose collision operator is $C_{12}$. Similarly, we will prove the relaxation to equilibrium of another model of  \eqref{QuantumBoltzmannLinda}, whose collision operator is $C_{12}+C_{22}$, and modified quantum Boltzmann model of the thermal cloud \eqref{QuantumBoltzmannLinda}, whose collision operator is $C_{12}+C_{22}+C_{13}$.} A related approach for the study of acoustic wave turbulence has been used in \cite{CraciunSmithBoldyrevBinh}. Let us also mention that some mathematical results of similar kinetic models have been obtained  in \cite{AlonsoGambaBinh,ArkerydNouri:AMP:2013,ArkerydNouri:2015:BCI,ArkerydNouri:2012:BCI,bernhoff2015half,bernhoff2017boundary,cai2018spatially,EscobedoVelazquez:2015:FTB,JinBinh,Lu:2004:OID,Lu:2005:TBE,Lu:2013:TBE,Binh9,Spohn:2010:KOT,germain2020optimal,ToanBinh,SofferBinh1,SofferBinh2}. 
\bigskip\\
%
%
 The plan of our paper is the following:
\begin{itemize}
\item In section \ref{Sec:Q12}, we show that the discrete version of a simplified version of  \eqref{QuantumBoltzmannLinda}, that contains only $C_{12}$, could be rewritten as a chemical reaction network. By using an approach inspired by the theory of toric dynamical system, we prove in Theorem 2.1. that the solution of the discrete version of a simplified version of  \eqref{QuantumBoltzmannLinda}, that contains only $C_{12}$, converges to the equilibrium exponentially in time.
\item In section \ref{Sec:Qnm}, we generalize Theorem \ref{TheoremQ12} to  collision operators of the forms $C_{13}$ and $C_{22}$. We prove that the solutions of the discrete versions of these equations, associated with the collision operators $C_{13}$ and $C_{22}$ converge to equilibria exponentially in Theorems 3.1 and \ref{TheoremQnm}. In the case of $C_{22}$, we consider a one-dimensional version of the model.
\item In Theorem \ref{TheoremSumQnm} of Section 4, we extend Theorem \ref{TheoremQnm} to   a simplified version of  \eqref{QuantumBoltzmannLinda}, that contains only $C_{12}+C_{22}$, and the modified quantum Boltzmann model of the thermal cloud \eqref{QuantumBoltzmannLinda}, that contains only $C_{12}+C_{22}+C_{31}$. 
\end{itemize}
\section{A reaction network approach for the case of $C_{12}$}\label{Sec:Q12}
\subsection{The dynamical system associated to $C_{12}$}
As mentioned in the introduction, the model derived from physics to describe the system that couples BEC-excitations at very low temperature  is the  discrete version of a simplified version of  \eqref{QuantumBoltzmannLinda}, that contains only $C_{12}$, described below.
\\\\ Let $\mathcal{L}_R$ denote the lattice of integer points 
$$\mathcal{L}_R=\{p\in\mathbb{Z}^3, |p|<R\}.$$
The discrete version of the simplified version of  \eqref{QuantumBoltzmannLinda}, that contains only $C_{12}$, reads 
\begin{equation}\label{DiscreteQuantum}\begin{aligned}
\dot{f}_{p_1}=
&\sum_{\substack{p_2,p_3\in\mathcal{L}_R,\\ p_1-p_2-p_3=0, \\ \mathcal{E}(p_1)-\mathcal{E}(p_2)-\mathcal{E}(p_3)=0}}{K}^{12}_{p_1,p_2,p_3}\left\{(f_{p_1}+1)f_{p_2}f_{p_3}-f_{p_1}(f_{p_2}+1)(f_{p_3}+1)\right\}\\
&\quad -2\sum_{\substack{p_2,p_3\in\mathcal{L}_R,\\ p_1+p_2-p_3=0,\\ \mathcal{E}(p_1)+\mathcal{E}(p_2)-\mathcal{E}(p_3)=0}}{K}^{12}_{p_1,p_2,p_3}\left\{(f_{p_3}+1)f_{p_1}f_{p_2}-f_{p_3}(f_{p_1}+1)(f_{p_2}+1)\right\},~~\end{aligned}
\end{equation}
for all $p_1$ in $\mathcal{L}_R$, where  $\mathcal{E}(p)$ is defined in \eqref{E3a}.
\subsection{Decoupling the quantum Boltzmann equation associated to $C_{12}$}
 Note that when $p_1=0$, $\mathcal{K}^{12}_{p_1,p_2,p_3}$ is also $0$, and therefore, we get 
\begin{eqnarray}\label{DiscreteQuantumIndex0}
\dot{f}_0=0,
\end{eqnarray}
which says that $f_0(t)$ is a constant for all time $t$. Moreover, $f_{p_1}$ does not depend on $f_0$ for all $p_1\ne 0$. Therefore, without loss of generality, we can suppose that $f_0(0)=0$, which leads to $f_0(t)=0$ for all $t$.
\\ Taking into account the fact  $\mathcal{E}(p)=c|p|$, note that if $p_1,p_2,p_3\in\mathcal{L}_R$ are different from $0$ and $p_3=p_1+p_2$ and $|p_3|=|p_1|+|p_2|$ (like in the second sum of \eqref{DiscreteQuantum}), then $p_1,p_2,p_3$ must be collinear and on the same side of the origin. Therefore, we infer that there exists a vector $P$  and  $k_1$, $k_2$, $k_3>0$, $k_1,k_2,k_3\in\mathbb{Z}$ such that 
$$p_1=k_1 P;~~~p_2=k_2 P;~~~p_3=k_3 P,~~~k_1+k_2=k_3.$$  
Since $\mathcal{L}_R$ is bounded, it follows that $k_1,k_2,k_3$ belong to a finite set of integer indices $\mathbb{I}=\{1,\dots,I\}$.
Arguing similarly for the first sum in \eqref{DiscreteQuantum}, we deduce that \eqref{DiscreteQuantum} is equivalent with the following system for $k_1\in\mathbb{I}$
\begin{equation}\label{DiscreteQuantum1DVersion2}\begin{aligned}
\dot{f}_{Pk_1}
=&\quad\sum_{\substack{k_2,k_3\in
\mathbb{I},\\ k_1-k_2-k_3=0}}{K}^{12}_{Pk_1,Pk_2,Pk_3}\left\{(f_{Pk_1}+1)f_{Pk_2}f_{Pk_3}-f_{Pk_1}(f_{Pk_2}+1)(f_{Pk_3}+1)\right\}\\
&\quad-{2}\sum_{\substack{k_2,k_3\in
\mathbb{I},\\ k_1+k_2-k_3=0}}{K}^{12}_{Pk_1,Pk_2,Pk_3}\left\{(f_{Pk_3}+1)f_{Pk_1}f_{Pk_2}-f_{Pk_3}(f_{Pk_1}+1)(f_{Pk_2}+1)\right\}.
\end{aligned}
\end{equation}
Note that the system of equations \eqref{DiscreteQuantum1DVersion2} shows a {\it decoupling} of the system of equations \eqref{DiscreteQuantum} along a ray $\{kP_0\}$ with $k>0$ (see Figure 1). As a consequence, it is sufficient to study the system of equations \eqref{DiscreteQuantum1DVersion2} for a fixed value of $P_0$, instead of the system of equations \eqref{DiscreteQuantum}.

If we denote $f_{k_1P_0}$ by $\bar{f}_{k_1}$ (with $k_1\in \mathbb{I}$) and ${K}^{12}_{k_1P_0,k_2P_0,k_3P_0}$  by $\mathcal{K}^{12}_{k_1,k_2,k_3}$, we obtain the following new system for the ray $\{k_1P_0|k_1>0\}$:
\begin{equation}\label{DiscreteQuantum1D}\begin{aligned}
\dot{\bar{f}}_{k_1}=&\quad\sum_{\substack{k_2,k_3\in
\mathbb{I},\\ k_1=k_2+k_3}}\mathcal{K}^{12}_{k_1,k_2,k_3}\{(\bar{f}_{k_1}+1)\bar{f}_{k_2}\bar{f}_{k_3}-\bar{f}_{k_1}(\bar{f}_{k_2}+1)(\bar{f}_{k_3}+1)\}
\\
&\quad-2\sum_{\substack{k_2,k_3\in
\mathbb{I},\\ k_1+k_2=k_3}}\mathcal{K}^{12}_{k_1,k_2,k_3}\{(\bar{f}_{k_3}+1)\bar{f}_{k_1}\bar{f}_{k_2}-\bar{f}_{k_3}(\bar{f}_{k_1}+1)(\bar{f}_{k_2}+1)\}, ~\forall k_1\in\mathbb{I}.\end{aligned}
\end{equation}
A simple calculation leads to the following {\it conservation of energy}
\begin{equation}\label{MassConservation1}
\sum_{k=1}^I k\dot{\bar{f}}_k=0,
\end{equation}
or equivalently
\begin{equation}\label{MassConservation2}
\sum_{k=1}^I k{\bar{f}_k}=\mbox{const}.
\end{equation}
\begin{figure}
  \centering
  \includegraphics[width=.9\linewidth]{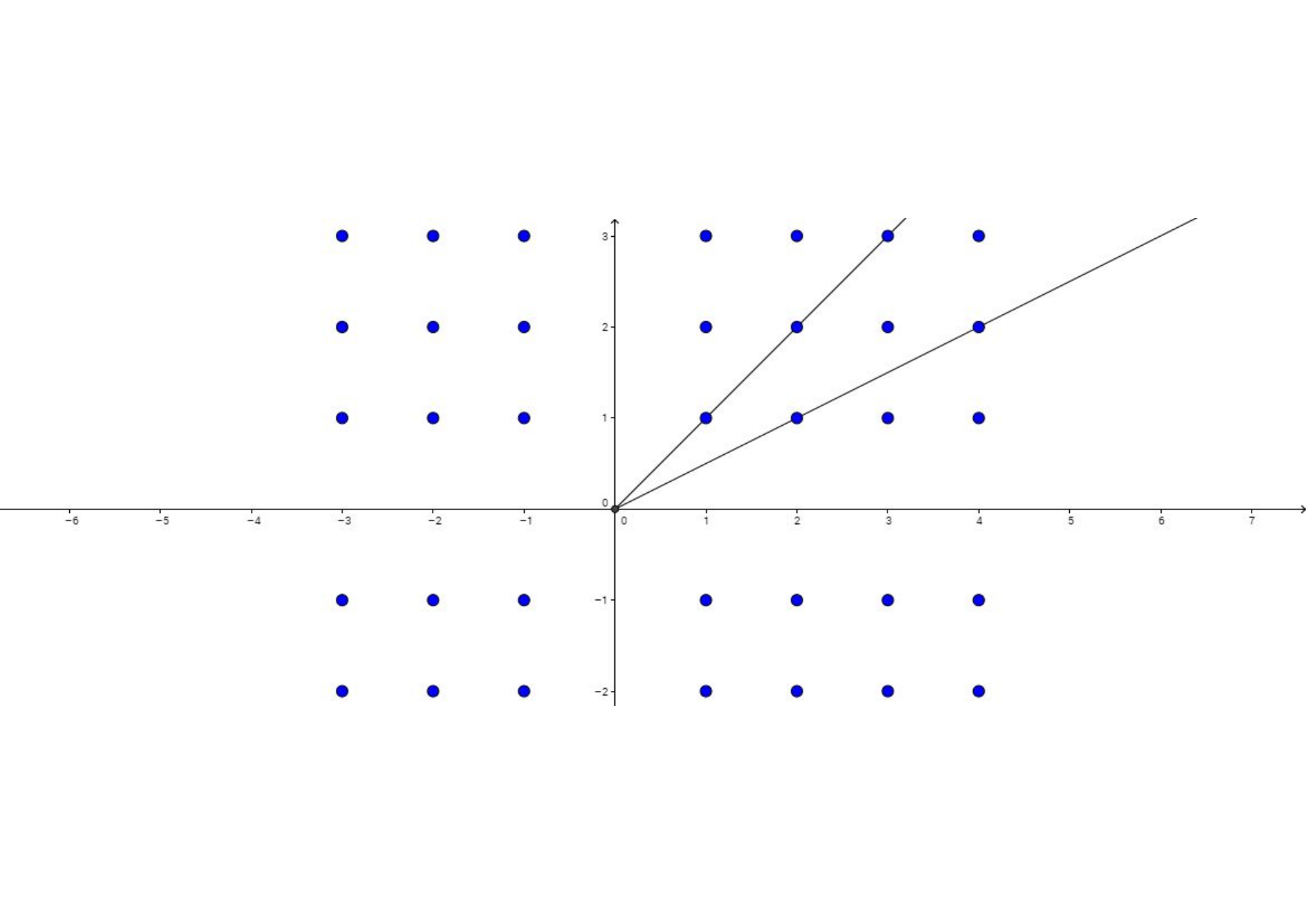}
\caption{We decouple the system \eqref{DiscreteQuantum} into rays}
\label{Fig2}
\end{figure}
We denote this discrete version of $C_{12}$ by
\begin{equation}\label{DiscreteC12}
\begin{aligned}
\mathcal{C}_{12}[\bar{f}_{k_1}]:= &~~\sum_{k_2+k_3=k_1}\mathcal{K}^{12}_{k_1,k_2,k_3}[(\bar{f}_{k_1}+1)\bar{f}_{k_2}\bar{f}_{k_3}-\bar{f}_{k_1}(\bar{f}_{k_2}+1)(\bar{f}_{k_3}+1)]\\
&~~-2\sum_{k_1+k_3=k_2}\mathcal{K}^{12}_{k_2,k_1,k_3}[(\bar{f}_{k_2}+1)\bar{f}_{k_1}\bar{f}_{k_3}-\bar{f}_{k_2}(\bar{f}_{k_1}+1)(\bar{f}_{k_3}+1)].
\end{aligned}
\end{equation} 

\subsection{The chemical reaction network associated to $C_{12}$}\label{Sec:ChemC12}
For $x\in\mathbb{R}_{>0}^n$ and $\alpha\in \mathbb{R}_{\geq 0}^n$, we denote by $x^\alpha$ the monomial $\Pi_{i=1}^n x_i^{\alpha_i}$.
\begin{definition} Consider a chemical reaction of the form
$$\ce{\alpha_1 X_1  +  \alpha_2 X_2  +  ...    +  \alpha_n X_n ->[\mathcal{K}] \beta_1 X_1  +  \beta_2 X_2  +  ...  +  \beta_n X_n},$$ 
where $\mathcal{K}$ is a positive parameter, called  reaction rate constant. Then the mass-action dynamical system generated by this reaction is
\begin{equation}\label{ExampleEquation}
\dot{x}= \mathcal{K} x^\alpha (\beta-\alpha),
\end{equation}
where $\alpha=(\alpha_1,\cdots,\alpha_n)^T$, $\beta=(\beta_1,\cdots,\beta_n)^T$, $\alpha_i,\beta_i\ge 0$ and  $x=(x_1,\cdots,x_n)^T$, in which $x_i$ is the concentration of the chemical species $X_i$. For the case of a network that contains several reactions
$$\ce{\alpha_1^j X_1^j  +  \alpha_2^j X_2^j  +  ...   +   \alpha_n^j X_n^j ->[\mathcal{K}_j] \beta_1 X_1^j  +  \beta_2^j X_2^j  +  ...   +  \beta_n^j X_n^j},$$ 
for $1\le j\le m$, its associated mass-action dynamical system is given by
\begin{equation}\label{DynSys}
\dot{x}=\sum_{j=1}^m \mathcal{K}_jx^{\alpha^j}(\beta^j-\alpha^j).
\end{equation}
\end{definition}

In this section, we will show  that the system \eqref{DiscreteQuantum1D} has the  form \eqref{DynSys} for a well-chosen set of reactions. 

 If $y\to y'$ and $y'\to y$ are reactions, we combine them together into a {\it ``reversible'' reaction $y\leftrightarrow y'$.}

  We will derive  the system \eqref{DiscreteQuantum1D} from the network of chemical reactions of the form:
\begin{eqnarray}\label{BioChemEq01}
&&\ce{X_{k_2} + X_{k_3} <-> X_{k_1}}\\\label{BioChemEq02}
&&\ce{X_{k_2} + X_{k_1} -> 2X_{k_2} + X_{k_3}},
\end{eqnarray}
for all $k_1,k_2,k_3$ in $\mathbb{I}$ such that $k_2+k_3=k_1$. If we denote by $F_k$ the concentration of the species $X_k$, we will show that, {\it for appropriate choices of the reaction rate constants} in \eqref{BioChemEq01} and \eqref{BioChemEq02}, the differential equations satisfied by $F_k$ according the mass-action kinetics are exactly the same as \eqref{DiscreteQuantum1D}.\\

In order to describe the connection between the mass-action system given by reactions of the form \eqref{BioChemEq01}-\eqref{BioChemEq02} and our system \eqref{DiscreteQuantum1D}, we need to consider several cases.

\bigskip

{\it Case 1:} For $k_2+k_3=k_1$, $k_2\ne k_3$, $k_1,k_2,k_3\in\mathbb{I}$, we consider 
\begin{eqnarray}\label{BioChemEq1a}
&&\ce{X_{k_2} + X_{k_3} <->[2\mathcal{K}^{12}_{k_1,k_2,k_3}] X_{k_1}}\\\label{BioChemEq1b}
&&\ce{X_{k_2} + X_{k_1} ->[2\mathcal{K}^{12}_{k_1,k_2,k_3}] 2X_{k_2} + X_{k_3}},
\end{eqnarray}
and for the reversible reaction \eqref{BioChemEq1a} the forward and backward rate constants are the same, i.e., we choose the reaction rate constants of the three reactions $X_{k_2}+X_{k_3}\to X_{k_1}$, $X_{k_1}\to X_{k_2}+X_{k_3}$, $X_{k_2}+X_{k_1}\to 2X_{k_2}+X_{k_3}$ to be $2\mathcal{K}^{12}_{k_1,k_2,k_3}$.

For example,  consider the reversible reaction \eqref{BioChemEq1a}:
in this reaction, $X_{k_1}$ is created from $X_{k_2}+X_{k_3}$ with the rate $2\mathcal{K}^{12}_{k_1,k_2,k_3}F_{k_2}F_{k_3}$ and $X_{k_1}$ is decomposed into  $X_{k_2}+X_{k_3}$ with the rate $-2\mathcal{K}^{12}_{k_1,k_2,k_3}F_{k_1}$. Therefore, the rate of change of the species $X_{k_1}$ due to this reaction is $ 2\mathcal{K}^{12}_{k_1,k_2,k_3}[F_{k_2}F_{k_3} - F_{k_1}]$.

For the irreversible reaction  \eqref{BioChemEq1b},
 $X_{k_1}$ is lost with the rate $-2\mathcal{K}^{12}_{k_1,k_2,k_3}F_{k_2}F_{k_1}$ to create $2X_{k_2}+X_{k_3}$. Therefore the rate of change of the species $X_{k_1}$ due to this reaction is $ -2\mathcal{K}^{12}_{k_1,k_2,k_3}F_{k_2}F_{k_1}$. By exchanging the roles of $X_{k_2}$ and $X_{k_3}$ in \eqref{BioChemEq1b}, we obtain the rate $ -2\mathcal{K}^{12}_{k_1,k_2,k_3}[F_{k_2}F_{k_1}+F_{k_3}F_{k_1}]$.

Therefore, the total rate of change of $X_{k_1}$ due to the reactions in \eqref{BioChemEq1a}-\eqref{BioChemEq1b} is
\begin{equation}\label{FirstChangeRate}
\begin{aligned}
&2\mathcal{K}^{12}_{k_1,k_2,k_3}[F_{k_2}F_{k_3}- F_{k_1}-F_{k_2}F_{k_1}-F_{k_3}F_{k_1}].
\end{aligned}
\end{equation}

\bigskip

{\it Case 2:} For $2k_2=k_1$, $k_1,k_2\in\mathbb{I}$, we consider
\begin{eqnarray}\label{BioChemEq1a2}
&&\ce{2X_{k_2} <->[\mathcal{K}^{12}_{k_1,k_2,k_3}] X_{k_1}}\\\label{BioChemEq1b2}
&&\ce{X_{k_2} + X_{k_1} ->[2\mathcal{K}^{12}_{k_1,k_2,k_3}] 3X_{k_2} }.
\end{eqnarray}

We choose the reaction rate constant of $2X_{k_2}\to X_{k_1}$ and the reaction rate constant of $X_{k_1}\to 2 X_{k_2}$ to be $\mathcal{K}^{12}_{k_1,k_2,k_3}$. Also, we choose the reaction rate constant of $X_{k_2}+X_{k_1}\to 3X_{k_2}$ to be $2\mathcal{K}^{12}_{k_1,k_2,k_3}$.

Consider the first reaction \eqref{BioChemEq1a2}:
In this reaction, $X_{k_1}$ is created from $2X_{k_2}$ with the rate $\mathcal{K}_{k_1,k_2,k_2}F_{k_2}^2$ and $X_{k_1}$ is decomposed into  $2X_{k_2}$ with the rate $-\mathcal{K}^{12}_{k_1,k_2,k_2}F_{k_1}$. The rate of change of the species $X_{k_1}$ is $\mathcal{K}^{12}_{k_1,k_2,k_2}[F_{k_2}^2 -F_{k_1}]$.

For the second reaction  \eqref{BioChemEq1b2}:
 $X_{k_1}$ is lost with the rate $-2\mathcal{K}^{12}_{k_1,k_2,k_2}F_{k_2}F_{k_1}$ to create $3X_{k_2}$. 

As a result, the rate of change of $X_{k_1}$ due to the reactions  \eqref{BioChemEq1a2}-\eqref{BioChemEq1b2} is
\begin{equation}\label{FirstChangeRate2}
\begin{aligned}
&\mathcal{K}^{12}_{k_1,k_2,k_3}[F_{k_2}^2- F_{k_1}-2F_{k_2}F_{k_1}].
\end{aligned}
\end{equation}

\bigskip

{\it Case 3:} Next, for $k_2=k_3+k_1$, $k_1\ne k_3$, $k_1,k_2,k_3\in\mathbb{I}$, let us look at the rate of change of $X_{k_1}$ in 
\begin{eqnarray}\label{BioChemEq2a}
&&\ce{X_{k_1} + X_{k_3} <->[2\mathcal{K}^{12}_{k_2,k_1,k_3}] X_{k_2}}\\\label{BioChemEq2b}
&&\ce{X_{k_2} + X_{k_1} ->[2\mathcal{K}^{12}_{k_2,k_1,k_3}] 2X_{k_1} + X_{k_3}}\\\label{BioChemEq2c}
&&\ce{X_{k_2} + X_{k_3} ->[2\mathcal{K}^{12}_{k_2,k_1,k_3}] X_{k_1} + 2X_{k_3}},
\end{eqnarray}

For \eqref{BioChemEq2a}, the rate of change of $X_{k_1}$ is $2\mathcal{K}^{12}_{k_2,k_1,k_3}[F_{k_2}-F_{k_1}F_{k_3}]$. For \eqref{BioChemEq2b}, the rate of change of $X_{k_1}$ is $2\mathcal{K}^{12}_{k_2,k_1,k_3}F_{k_1}F_{k_2}.$ By exchanging the roles of $X_1$ and $X_3$, we obtain the rate $2\mathcal{K}^{12}(k_2,k_1,k_3)[F_{k_1}F_{k_2}+F_{k_2}F_{k_3}]$.\\
Therefore, the rate of change of $X_{k_1}$ due to reactions in \eqref{BioChemEq2a}-\eqref{BioChemEq2c} is
\begin{equation}\label{SecondChangeRate}
\begin{aligned}
&-2\mathcal{K}^{12}_{k_2,k_1,k_3}[F_{k_1}F_{k_3}- F_{k_2}-F_{k_2}F_{k_3}-F_{k_1}F_{k_2}].
\end{aligned}
\end{equation} 

\bigskip

{\it Case 4:} Now, for $k_2=2k_1$, $k_1,k_2\in\mathbb{I}$, let us look at the rate of change of $X_{k_1}$ in 
\begin{eqnarray}\label{BioChemEq2a2}
&&\ce{2X_{k_1} <->[\mathcal{K}^{12}_{k_2,k_1,k_1}] X_{k_2}}\\\label{BioChemEq2b2}
&&\ce{X_{k_2} + X_{k_1} ->[2\mathcal{K}^{12}_{k_2,k_1,k_3}] 3X_{k_1}},
\end{eqnarray}
For \eqref{BioChemEq2a2}, the rate of change of $X_{k_1}$ is $2\mathcal{K}^{12}_{k_2,k_1,k_3}[F_{k_2}-F_{k_1}^2]$.
For \eqref{BioChemEq2b2}, the rate of change of $X_{k_1}$ is $4\mathcal{K}^{12}_{k_2,k_1,k_3}F_{k_1}F_{k_2}.$ 
Therefore, the rate of change of $X_{k_1}$ due to the reactions \eqref{BioChemEq2a2}-\eqref{BioChemEq2b2} is
\begin{equation}\label{SecondChangeRate2}
\begin{aligned}
&-2\mathcal{K}^{12}_{k_2,k_1,k_3}[F_{k_1}^2- F_{k_2}-2F_{k_1}F_{k_2}].
\end{aligned}
\end{equation}

From \eqref{FirstChangeRate}, \eqref{FirstChangeRate2}, \eqref{SecondChangeRate}, \eqref{SecondChangeRate2},
the total  rate of change of $X_{k_1}$ is
\begin{equation}\label{TotalChangeRate}
\begin{aligned}
&~~\sum_{k_2+k_3=k_1, k_2< k_3}2\mathcal{K}^{12}_{k_1,k_2,k_3}[(F_{k_1}+1)F_{k_2}F_{k_3}-F_{k_1}(F_{k_2}+1)(F_{k_3}+1)]\\
&~~+\sum_{2k_2=k_1}\mathcal{K}^{12}_{k_1,k_2,k_2}[(F_{k_1}+1)F_{k_2}F_{k_2}-F_{k_1}(F_{k_2}+1)(F_{k_2}+1)]\\
&~~-\sum_{k_1+k_3=k_2}2\mathcal{K}^{12}_{k_2,k_1,k_3}[(F_{k_2}+1)F_{k_1}F_{k_3}-F_{k_2}(F_{k_1}+1)(F_{k_3}+1)],
\end{aligned}
\end{equation} 
which can be written as
\begin{equation}\label{TotalChangeRateEquation}
\begin{aligned}
\dot{F}_{k_1}\ =&~~\sum_{k_2+k_3=k_1}\mathcal{K}^{12}_{k_1,k_2,k_3}[(F_{k_1}+1)F_{k_2}F_{k_3}-F_{k_1}(F_{k_2}+1)(F_{k_3}+1)]\\
&~~-2\sum_{k_1+k_3=k_2}\mathcal{K}^{12}_{k_2,k_1,k_3}[(F_{k_2}+1)F_{k_1}F_{k_3}-F_{k_2}(F_{k_1}+1)(F_{k_3}+1)],
\end{aligned}
\end{equation} 
which shows that the system of differential equations satisfied by the concentrations $F_k$ is exactly the same as the system of differential equations \eqref{DiscreteQuantum1D} satisfied by the densities $f_k$.
\subsection{A change of variables}
In this section, we introduce a change of variables that will help us to investigate the dynamics of the system \eqref{TotalChangeRateEquation}.

Define $$G_k=\frac{F_k}{F_k+1},$$
then 
$$F_k=\frac{G_k}{1-G_k},$$
and 
$$F_{k_3}+F_{k_1}F_{k_3}+F_{k_2}F_{k_3}-F_{k_1}F_{k_2}=\frac{G_{k_3}-G_{k_1}G_{k_2}}{(1-G_{k_1})(1-G_{k_2})(1-G_{k_3})},$$
$$F_{k_1}+F_{k_1}F_{k_2}+F_{k_1}F_{k_3}-F_{k_3}F_{k_2}=\frac{G_{k_1}-G_{k_2}G_{k_3}}{(1-G_{k_1})(1-G_{k_2})(1-G_{k_3})}.$$
Notice that $0<F_k<\infty$ and $0<G_k<1$.\\
The system  \eqref{TotalChangeRateEquation} is converted into
\begin{eqnarray}\label{DiscreteQuantumConverted}
\frac{\dot{G}_{k_1}}{(1-G_{k_1})^2}\nonumber
&=& \tilde{\mathcal{C}}_{12}[G](k_1):=2\sum_{k_1+k_2=k_3}\mathcal{K}^{12}_{k_1,k_2,k_3}\frac{G_{k_3}-G_{k_1}G_{k_2}}{(1-G_{k_1})(1-G_{k_2})(1-G_{k_3})}\\
&+&\sum_{k_1=k_2+k_3}\mathcal{K}^{12}_{k_1,k_2,k_3}\frac{-G_{k_1}+G_{k_2}G_{k_3}}{(1-G_{k_1})(1-G_{k_2})(1-G_{k_3})}, \forall k_1\in\mathbb{I}.
\end{eqnarray}
Suppose that  $G$ represents the column vector $(G_1,\dots,G_I)^T$. Let us also denote by  $\bar{X}_k$, the vector 
$$\left( {\begin{array}{cc}
 0\\       \cdots\\ 1\\ \cdots \\   0  \end{array} } \right),$$
in which the only element that different from $0$ is the $k$-th one.

Also, for  $k_1\ne k_2$, we denote
$$K_{\bar{X}_{k_1}+\bar{X}_{k_2} \rightarrow \bar{X}_{k_3}}(G):=2\mathcal{K}^{12}_{k_1,k_2,k_3}\frac{G_{k_1}G_{k_2}}{(1-G_{k_1})(1-G_{k_2})(1-G_{k_3})},$$
$$K_{\bar{X}_{k_3} \rightarrow X_{k_1}+\bar{X}_{k_2}}(G):=2\mathcal{K}^{12}_{k_1,k_2,k_3}\frac{G_{k_3}}{(1-G_{k_1})(1-G_{k_2})(1-G_{k_3})},$$
$$\mathcal{K}_{\bar{X}_{k_1}+\bar{X}_{k_2}\leftrightarrow \bar{X}_{k_3}}:=2\mathcal{K}^{12}_{k_1,k_2,k_3}.$$

 Otherwise, if $k_1= k_2$ , we denote
$$K_{2\bar{X}_{k_1}\rightarrow X_{k_3}}(G):=\mathcal{K}^{12}_{k_1,k_1,k_3}\frac{G_{k_1}G_{k_2}}{(1-G_{k_1})^2(1-G_{k_3})},$$
$$K_{\bar{X}_{k_3} \rightarrow 2\bar{X}_{k_1}}(G):=\mathcal{K}^{12}_{k_1,k_1,k_3}\frac{G_{k_3}}{(1-G_{k_1})^2(1-G_{k_3})},$$
$$\mathcal{K}_{2\bar{X}_{k_1}\leftrightarrow \bar{X}_{k_3}}:=2\mathcal{K}^{12}_{k_1,k_1,k_3}.$$

Using these notations,  the   system \eqref{DiscreteQuantumConverted} could be rewritten as:
\begin{eqnarray}\label{EqG}
{\dot{G}}
&=&\mbox{diag}\left( {\begin{array}{cc}
  {(1-G_1)^2}\\       \cdots \\   {(1-G_I)^2}  \end{array} } \right)\times\\\nonumber
& &\times\sum_{k_1+k_2=k_3}\left[K_{\bar{X}_{k_1}+\bar{X}_{k_2} \rightarrow \bar{X}_{k_3}}(G)-K_{\bar{X}_{k_3} \rightarrow \bar{X}_{k_1}+\bar{X}_{k_2} }(G)\right](\bar{X}_{k_3}-\bar{X}_{k_1}-\bar{X}_{k_2} ).
\end{eqnarray}

Equivalently, we can also write
\begin{equation}\label{EqG2}
{\dot{G}}
=\mbox{diag}\left( {\begin{array}{cc}
  {(1-G_1)^2}\\       \cdots \\   {(1-G_I)^2}  \end{array} } \right)\sum_{y\leftrightarrow y'}\left[K_{y \rightarrow y'}(G)-K_{y' \rightarrow y}(G)\right](y'-y ),
\end{equation}
where $y\leftrightarrow y'$ belongs to the set of reversible reactions
\begin{eqnarray}\label{ChemicalNetworkReversible1a}
\bar{X}_{k_1}+\bar{X}_{k_2}&\longleftrightarrow &\bar{X}_{k_3},
\end{eqnarray}
with $k_1+k_2=k_3$.
\subsection{Convergence to equilibrium}

\begin{theorem}\label{TheoremQ12} For any positive initial condition, the solution $$f(t)=(f_p(t))_{p\in\mathcal{L}_R}$$ of the discrete quantum Boltzmann equation \eqref{DiscreteQuantum} converges to an equilibrium state $f^*=(f_p^*)_{p\in\mathcal{L}_R}$. For each ray $\{kP_0\}_{k\geq 1}$ there exists a positive constant $\rho({P_0})$ such that if $p=kP_0$ then $$f^*_p=\frac{1}{e^{k\rho({P_0})}-1}.$$ Moreover, the solution $f(t)$ of \eqref{DiscreteQuantum} converges to $f^*$ exponentially fast in the following sense: there exist positive constants $C_1$, $C_2$ such that $$\max_{p\in\mathcal{L}_R}|f_p(t)-f_p^*|<C_1e^{-C_2t}.$$
\end{theorem}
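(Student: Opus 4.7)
The plan is to exploit the ray-decoupling reduction that was carried out in Sections 2.2--2.4. Because of energy conservation $\sum_k k f_k = \text{const}$ along each ray, it suffices to fix a ray $\{kP_0\}_{k\geq 1}$ and prove convergence for the finite-dimensional system \eqref{TotalChangeRateEquation}; the global result is then assembled ray by ray. I work in the transformed variables $G_k = F_k/(F_k+1)\in(0,1)$ of equation \eqref{EqG2}, which exposes the structure of a mass-action dynamical system associated with the weakly reversible reaction network $\{X_{k_1}+X_{k_2}\leftrightarrow X_{k_3}:k_1+k_2=k_3,\ k_i\in\mathbb{I}\}$, multiplied by the strictly positive diagonal matrix $\mathrm{diag}((1-G_k)^2)$.

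Next I identify the equilibrium. At a steady state, detailed-balance in \eqref{EqG2} forces $G_{k_3}=G_{k_1}G_{k_2}$ whenever $k_1+k_2=k_3$. Iterating this multiplicative relation on $\mathbb{I}=\{1,\dots,I\}$ gives $G_k=G_1^k$; writing $G_1=e^{-\rho}$ yields $G_k^*=e^{-k\rho}$ and hence $f_k^*=1/(e^{k\rho}-1)$, the Planck/Bose--Einstein profile. The parameter $\rho=\rho(P_0)$ is uniquely determined by the conserved energy on the ray, because the map $\rho\mapsto\sum_{k=1}^I k\,(e^{k\rho}-1)^{-1}$ is strictly decreasing and surjective onto $(0,\infty)$. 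This gives existence and uniqueness of the equilibrium claimed in the theorem.

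For global convergence I would use the Horn--Jackson / toric dynamical systems approach. Because the reaction network is reversible and the rates at $G^*$ satisfy pairwise detailed balance, the candidate Lyapunov function is the Boltzmann-type relative entropy
\begin{equation*}
H(G) \;=\; \sum_{k\in\mathbb{I}}\Bigl[G_k\log\frac{G_k}{G_k^*}+(1-G_k)\log\frac{1-G_k}{1-G_k^*}\Bigr],
\end{equation*}
equivalent up to the conservation law to the physical $H$-functional $\sum_k[(1+f_k)\log(1+f_k)-f_k\log f_k]-\rho\sum_k k f_k$. A direct computation using the reversible pairing in \eqref{EqG2} shows $\dot H\le 0$, with equality iff $G_{k_3}=G_{k_1}G_{k_2}$ for every triple, i.e.\ at $G^*$. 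Combined with the conservation law, which confines the trajectory to a compact affine slice, LaSalle's invariance principle gives convergence as soon as one rules out the trajectory approaching the boundary of $(0,1)^I$. This persistence step is exactly the obstacle that the global attractor conjecture addresses, and here it can be handled by applying the siphon/Petri-net arguments of \cite{CraciunNazarovPantea:2013:PAP,Craciun:2015:TDI} to the network $\{X_{k_1}+X_{k_2}\leftrightarrow X_{k_3}\}$, whose structure is simple enough that every semilocking set is disallowed by energy conservation.

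Finally, to upgrade convergence to an exponential rate I would linearize the system around $G^*$. The diagonal prefactor $\mathrm{diag}((1-G_k^*)^2)$ is strictly positive, and the Hessian of $H$ at $G^*$ is positive definite on the conservation subspace; the Jacobian of \eqref{EqG2} restricted to that subspace is therefore similar to a symmetric negative-definite operator, yielding a spectral gap $\lambda>0$. Standard ODE arguments then give $|G(t)-G^*|\le C e^{-\lambda t}$ in a neighborhood of $G^*$, and since the global convergence proved above brings the trajectory into this neighborhood in finite time, the exponential bound propagates globally. Translating back through $f_k=G_k/(1-G_k)$, which is a local diffeomorphism at $G^*$, produces the claimed estimate $\max_p|f_p(t)-f_p^*|<C_1 e^{-C_2 t}$. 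The main obstacle, as usual in this circle of ideas, is the persistence step (avoiding the boundary $G_k\to 0$, i.e.\ $f_k\to 0$); the rest is structural and rate-extraction is routine once the spectral gap is in hand.
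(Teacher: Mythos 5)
Your overall architecture is the same as the paper's: ray decoupling, the change of variables $G_k=F_k/(1+F_k)$, identification of the equilibrium from the balance relations $G_{k_3}=G_{k_1}G_{k_2}$ together with the energy conservation law (giving $G_k^*=e^{-k\rho}$ and the Bose--Einstein profile), persistence via siphons/$P$-semiflows, LaSalle, and linearization at $G^*$ for the exponential rate. The genuine gap is your Lyapunov function. The vector field \eqref{EqG2} is the reaction-network sum $\mathcal{S}(G)=\sum_{y\leftrightarrow y'}[K_{y\to y'}(G)-K_{y'\to y}(G)](y'-y)$ \emph{premultiplied} by $\mathrm{diag}\bigl((1-G_k)^2\bigr)$, so for the Horn--Jackson telescoping $\dot L=\sum_{y\leftrightarrow y'}(\cdots)\bigl[\log(G/G^*)^{y'}-\log(G/G^*)^{y}\bigr]\le 0$ to survive, the $k$-th component of the gradient of the Lyapunov function must be $(1-G_k)^{-2}\log(G_k/G_k^*)$; this is precisely what the paper's choice $L(G)=\sum_k\bigl(\log(1-G_k)+\tfrac{G_k\log G_k}{1-G_k}-\tfrac{\log G_k^*}{1-G_k}\bigr)$ achieves. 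Your $H$ instead has $\partial_{G_k}H=\log\tfrac{G_k(1-G_k^*)}{G_k^*(1-G_k)}=\log(F_k/F_k^*)$, so that $\dot H=\sum_k(1-G_k)^2\log(F_k/F_k^*)\,\mathcal{S}_k(G)$: the state-dependent weights $(1-G_k)^2$ destroy the per-reaction pairing, and the test function is $\log(F_k/F_k^*)$ rather than $\log(G_k/G_k^*)$, so the ``direct computation'' showing $\dot H\le 0$ does not go through. Relatedly, your claimed identification of $H$ with the physical functional is false: substituting $G_k=F_k/(1+F_k)$ gives $H=\sum_k\tfrac{F_k\log F_k-(1+F_k)\log(1+F_k)}{1+F_k}+\cdots$, with extra weights $1/(1+F_k)$, not $\sum_k[F_k\log F_k-(1+F_k)\log(1+F_k)]+\rho\sum_k kF_k$ (note also that the expression you wrote, entropy minus $\rho$ times energy, \emph{increases} along the flow, so a sign flip is needed in any case). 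Replacing $H$ by the paper's $L$ --- equivalently, by $\sum_k[F_k\log F_k-(1+F_k)\log(1+F_k)+k\rho(1+F_k)]$ in the original variables, cf.\ \eqref{LyapunovF} --- repairs this step, and the rest of your argument then matches the paper.

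Two smaller points. First, \eqref{EqG2} is not literally mass-action: $K_{y\to y'}(G)=\mathcal{K}_{y\leftrightarrow y'}G^y\mathcal{H}_{y,y'}(G)$ with $\mathcal{H}_{y,y'}=\mathcal{H}_{y',y}$ positive and bounded, so the persistence step requires either the $\mathcal{K}$-variable extension of reversible mass-action persistence from \cite{Craciun:2015:TDI} or the Petri-net/siphon argument of \cite{AngeliDeLeenheerSontag:PRF:2011} run on the $F$-formulation, as in the paper's Step 2; your siphon remark is the right idea but should be phrased for that setting. Second, for the exponential rate the paper establishes negative definiteness of $\mathrm{Jac}(\mathcal{S}(G^*))$ directly in the weighted inner product $y*\delta=\sum_k y_k\delta_k/G_k^*$ and then uses that $D\,\mathrm{Jac}(\mathcal{S}(G^*))$ has the same spectrum as $D^{1/2}\mathrm{Jac}(\mathcal{S}(G^*))D^{1/2}$ for the positive diagonal $D=\mathrm{diag}((1-G_k^*)^2)$; once the correct $L$ is in place, your Hessian/spectral-gap argument is equivalent to this and is fine.
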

\begin{proof}
By using the decoupling and the change of variables discussed in the previous sections, for each ray $\{kP_0\}_{k\geq 1}$, we can reduce the study of $f$ to $F$, which satisfies \eqref{TotalChangeRateEquation}. From $F$, we can switch to study $G$, which is the solution of \eqref{EqG2}.
\bigskip
{\\\it Step 1: The Lyapunov function.}
 We recall that  \eqref{DiscreteQuantumConverted} could be rewritten under the form
\begin{equation}\label{EQ}
{\dot{G}}=\mbox{diag}\left( {\begin{array}{cc}
  {(1-G_1)^2}\\       \cdots \\   {(1-G_I)^2}  \end{array} } \right)\sum_{y \leftrightarrow y'}\left[K_{y \to y'}(G)-K_{y'\to y}(G)\right](y'-y).
\end{equation}
We define the  function
\begin{equation}\label{Lyapunov}
L(G)=\sum_{k=1}^I \left(\log (1-G_k)+\frac{G_k \log G_k}{1-G_k}-\frac{\log G_k^*}{1-G_k}\right),
\end{equation}
where $G^*_k=\frac{1}{e^{k\rho}}$, for some $\rho>0$,
and we will show that $L$ is a Lyapunov function for the system \eqref{DiscreteQuantumConverted}.

We have
\begin{equation}
\nabla L=\left( {\begin{array}{cc}
  \frac{1}{(1-G_1)^2}\log \frac{G_1}{G^*_1}\\       \cdots \\   \frac{1}{(1-G_I)^2}\log \frac{G_I}{G^*_I}   \end{array} } \right),
\end{equation}
which implies that 
\begin{eqnarray}\label{eq1}
\mbox{diag}\left( {\begin{array}{cc}
  {(1-G_1)^2}\\       \cdots \\   {(1-G_I)^2}  \end{array} } \right)\cdot(y'-y)\cdot\nabla L&=&\log \left(\frac{G}{G^*}\right)^{y'-y}\\\nonumber
& = &\log\left(\frac{G}{G^*}\right)^{y'}-\log\left(\frac{G}{G^*}\right)^{y}.
\end{eqnarray}

If we define
$$\mathcal{H}_{y,y'}(G)=\frac{K_{y\to y'}(G)}{\mathcal{K}_{y\leftrightarrow y'}G^y},$$
then $\mathcal{H}_{y,y'}=\mathcal{H}_{y',y}$ for $y$ and $y'$ as in \eqref{ChemicalNetworkReversible1a}. Moreover,
we have
\begin{equation}\label{eq2}
\begin{aligned}
& K_{y\to y'}(G)-K_{y'\to y}(G)=\\
=& \mathcal{K}_{y\leftrightarrow y'}G^y \mathcal{H}_{y,y'}(G)-\mathcal{K}_{y\leftrightarrow y'}G^{y'} \mathcal{H}_{y,y'}(G)\\
=&\mathcal{K}_{y\leftrightarrow y'}\mathcal{H}_{y,y'}(G)[G^y-G^{y'}]\\
=&\mathcal{K}_{y\leftrightarrow y'}(G^*)^y \mathcal{H}_{y,y'}(G)\left[\frac{G^y }{(G^*)^y}
-\frac{G^{y'} }{(G^*)^{y'} }\right],
\end{aligned}
\end{equation}
since $(G^*)^y=(G^*)^{y'}$.\\
Combining \eqref{EQ}, \eqref{eq1} and \eqref{eq2}, we obtain
\begin{equation}
\begin{aligned}
& \ \dot{G}\cdot\nabla L = \\
 =  & \sum_{y\leftrightarrow y'}\left[\log\left(\frac{G}{G^*}\right)^{y'}-\log\left(\frac{G}{G^*}\right)^{y}\right] \mathcal{K}_{y\leftrightarrow y'}(G^*)^y \mathcal{H}_{y,y'}(G)\left[\frac{G^y }{(G^*)^y}
-\frac{G^{y'} }{(G^*)^{y'} }\right]\\
 \leq  & \  0,
\end{aligned}
\end{equation}
since $\mathrm{log}$ is an increasing function. 
Also, note that the above inequality is strict unless
\begin{equation}\label{C12equilibrium}
\frac{G^y }{(G^*)^y}
=\frac{G^{y'} }{(G^*)^{y'} },\end{equation}
for all reactions $y \leftrightarrow y'.$
\\Since $(G^*)^y=(G^*)^{y'}$ for all reactions $y \leftrightarrow y',$ this implies $G^{*}_{k_1}\cdot G^{*}_{k_2} = G^*_{k_1+k_2}$ for all $k_1$ and $k_2$ such that $k_1+k_2\le I$. As a consequence   $G^*_k=e^{-\rho k}$, for some positive constant $\rho$. Moreover, \eqref{C12equilibrium}  implies that at equilibrium  $(G)^y=(G)^{y'}$  for all reactions $y \leftrightarrow y',$ which leads to $G_k=e^{-\rho' k}$, for some positive constant $\rho'$. 
\\ By the conservation relation
$$\sum_{k=1}^Ik\frac{G_k}{1-G_k}=\sum_{k=1}^Ik\frac{G_k^*}{1-G_k^*},$$
we deduce that 
$$\sum_{k=1}^Ik\frac{e^{-\rho k}}{1-e^{-\rho k}}=\sum_{k=1}^Ik\frac{e^{-\rho' k}}{1-e^{-\rho' k}}.$$
By the monotonicity of the function $\rho\to\frac{e^{-\rho k}}{1-e^{-\rho k}}$, we conclude that $\rho=\rho'$, i.e., $G^*$ is the only equilibrium point that satisfies the same conservation relation as the initial condition.

Now, we will prove that there exists exactly one critical point of the Lyapunov function $L$ within each invariant set $$\mathfrak{S}_{c}:=\left\{\sum_{k=1}^Ik\frac{G_k}{1-G_k}=c\right\}.$$
Since 
$$\nabla L=\mbox{diag}\left( {\begin{array}{cc}
  \frac{1}{(1-G_1)^2}\\       \cdots \\   \frac{1}{(1-G_I)^2} \end{array} } \right)[\log G-\log G^*],$$
the projection of $\nabla L$ on the tangent space to the set $\mathfrak{S}_{c}$ is $0$ if and only if there exists a constant $\varrho$ such that
$$\nabla L=\varrho \cdot \nabla \left(\sum_{k=1}^Ik\frac{G_k}{1-G_k}\right),$$
which is equivalent with $$\mbox{diag}\left( {\begin{array}{cc}
  \frac{1}{(1-G_1)^2}\\       \cdots \\   \frac{1}{(1-G_I)^2} \end{array} } \right)[\log G-\log G^*]=\varrho\left( {\begin{array}{cc}
  \frac{1}{(1-G_1)^2}\\       \cdots \\   \frac{I}{(1-G_I)^2} \end{array} } \right).$$
A direct consequence of the above is the following system of identities
\begin{eqnarray*}
\log G_1 - \log G_1^* &=& \varrho,\\
\log G_2 - \log G_2^* &=& 2\varrho,\\
&\cdots&\\
\log G_I - \log G_I^* &=& I\varrho,
\end{eqnarray*}
yielding
$$\frac{G_k}{G_k^*}=e^{k\varrho}, \ \ \ \forall k\in \{1,\cdots,I\}.$$
Moreover, since $G_k$ and $G_k^*$ satisfy the same conservation law then it follows that $G=G^*$. This implies that $G^*$ is the only critical point of $L$ on the invariant set $\mathfrak{S}_c$.
\bigskip
{\\\it Step 2: Differential inclusions and persistence.}
 Now let us observe that \eqref{DiscreteQuantum1D} could be regarded as a $\mathcal{K}$-variable mass-action system for the reversible network \eqref{ChemicalNetworkReversible1a}. For this  we write
$$F_{k''}+F_kF_{k''}+F_{k'}F_{k''}=(1+F_k+F_{k'})F_{k''},$$
and note that $1+F_k+F_{k'}$ is bounded below by $1$ and above by $1+2C$, where
$$C=\sum_{k=1}^I kF_k.$$

Therefore, the  results of \cite{Craciun:2015:TDI} about persistence of $\mathcal{K}$-variable reversible mass-action systems can be applied and we conclude that the system is persistent. Alternatively, we  can also use the Petri net argument of \cite{AngeliLeenheerSontag:2011:PRF}, to prove that the system is persistent, as follows. Note that $F_k$ is the density function of the species $X_k$. It is straightforward that each {\it siphon} is $\{X_1, X_2,\cdots, X_I\}$, which contains the support of the {\it $P$-semiflow} (see \cite{AngeliLeenheerSontag:2011:PRF} for the definition of siphons and P-semiflows) given by 
$$\sum_{k=1}^IkF_k=\mbox{constant}.$$
As a result, the Petri net theory developed in \cite{AngeliLeenheerSontag:2011:PRF} can be applied and it follows that the system is persistent.

Therefore, by using the existence of the globally defined strict Lyapunov function $L$, and the LaSalle invariance principle, it follows that all trajectories converge to the unique positive equilibrium $G^*$ that we discussed in~Step~1.
 \bigskip
{\\\\\it Step 3: Exponential rate of convergence.}
 Define 
\begin{eqnarray}\label{DefineR}\nonumber
&&\mathcal{R}(G)=\\
&=&\mbox{diag}\left( {\begin{array}{cc}
  {(1-G_1)^2}\\       \cdots \\   {(1-G_I)^2}  \end{array} } \right)\sum_{y \leftrightarrow y'}\left[K_{y \to y'}(G)-K_{y'\to y}(G)\right](y'-y)\\\nonumber
&=&\mbox{diag}\left( {\begin{array}{cc}
  {(1-G_1)^2}\\       \cdots \\   {(1-G_I)^2}  \end{array} } \right)\sum_{y \leftrightarrow y'}[\mathcal{K}_{y\leftrightarrow y'}G^y -\mathcal{K}_{y\leftrightarrow y'}G^{y'}] \mathcal{H}_{y,y'}(G)(y'-y),
\end{eqnarray}
and define
\begin{eqnarray}\label{DefineRR}\nonumber
\mathcal{S}(G)
&=&\sum_{y \leftrightarrow y'}[\mathcal{K}_{y\leftrightarrow y'}G^y -\mathcal{K}_{y\leftrightarrow y'}G^{y'}]\mathcal{H}_{y,y'}(G)(y'-y).
\end{eqnarray}
\\ Following \cite{CraciunFeinberg:2005:MEI}, we compute the Jacobian of $\mathcal{S}$ at the equilibrium point $G^*$, applied to an arbitrary vector $\delta\ne 0$  that belongs to the span of the vectors $y'-y$
\begin{eqnarray}\label{JacR}
\mbox{Jac}(\mathcal{S}(G^*))\delta =\sum_{y \leftrightarrow y'} \mathcal{K}_{y\leftrightarrow y'}(G^*)^y ((y-y')*\delta)\mathcal{H}_{y,y'}(G^*)(y-y'),
\end{eqnarray}
in which the inner product $*$ is defined as
$$y*\delta=\sum_{1}^I \frac{y_k \delta_k}{G_k}.$$
Therefore 
\begin{eqnarray}\label{JacR1}
&& [\mbox{Jac}(\mathcal{S}(G^*))\delta]*\delta =\\\nonumber
& =& \sum_{y \leftrightarrow y'} \mathcal{K}_{y\leftrightarrow y'}(G^*)^y\mathcal{H}_{y,y'}(G^*)[(y-y')*\delta][(y'-y)*\delta]<0.
\end{eqnarray}
Now, we compute the Jacobian of $\mathcal{R}$ at the equilibrium point $G^*$,
\begin{eqnarray}\nonumber
& &\mbox{Jac}(\mathcal{R}(G^*))\\\nonumber
&=&\mbox{diag}\begin{bmatrix}
\partial_{G_1} {(1-G^*_1)^2}\mathcal{S}(G^*)_1\\\cdots     \\    \partial_{G_I} {(1-G^*_I)^2}\mathcal{S}(G^*)_I\end{bmatrix}+\mbox{diag}\begin{bmatrix}
  {(1-G^*_1)^2}\\\cdots     \\    {(1-G^*_I)^2}\end{bmatrix} \mbox{Jac}(\mathcal{S}(G^*))\\\nonumber
&=&\mbox{diag}\begin{bmatrix}
 {(1-G^*_1)^2} \\\cdots     \\   {(1-G^*_I)^2} \end{bmatrix} \mbox{Jac}(\mathcal{S}(G^*)),
\end{eqnarray}
where the second equality is due to the fact that since $G^*$ is an equilibrium we have that $\mathcal{S}(G^*)=0$.\\
Since $$D:=\mbox{diag}\begin{bmatrix}
  {(1-G^*_1)^2} \\\cdots     \\    {(1-G^*_I)^2}\end{bmatrix} $$
is a diagonal matrix and $ A:=\mbox{Jac}(\mathcal{S}(G^*))$ is negative definite, then $D^{1/2}AD^{1/2}$ is also negative definite with respect to this inner product. Since 
$$\mbox{det}(DA-\lambda Id)=\mbox{det}(D^{1/2}AD^{1/2}-\lambda Id),~~\forall \lambda\in\mathbb{R},$$
it follows that $D^{1/2}AD^{1/2}$ and $DA$ have the same eigenvectors, so $DA$ is  negative definite. In other words, $\mbox{Jac}(\mathcal{R}(G^*))$ is negative definite.
The exponential rate of convergence
$$\max\{|G_1(t)-G_1^*|,\cdots, |G_I(t)-G_I^*|\}\leq C_1 e^{-C_2 t}.
$$
 then follows from the fact that the Jacobian above is negative definite. This  leads to the conclusion of the theorem.

\end{proof}
\begin{remark}
The Lyapunov function \eqref{Lyapunov} in the variable $F$ reads
\begin{equation}\label{LyapunovF}
L(F)=\sum_{k=1}^I [F_k\log F_k-(1+F_k)\log (1+F_k)+(\log (F_{k}^*+1)-\log F_k^*)(F_{k}+1)],
\end{equation} 
and it is  a strictly convex function.
\end{remark}
\begin{remark}
If the intersection between the ray $\{kP_0\}_{k\geq 1}$ and $\mathcal{L}_R$ contains a single point, then the solution $f(t)$ of \eqref{DiscreteQuantum} has $f_{P_0}\equiv 0$, so $f_{P_0}\equiv$ constant. 
\end{remark}
\section{A reaction network approach for the case of $C_{13}$ and $C_{22}$}\label{Sec:Qnm}
\subsection{The dynamical system associated to $C_{13}$}
As we discussed in the Introduction, we are also interested in the dynamics given by the discrete model of the collision operator $C_{13}$, described in \eqref{QuantumBoltzmannLinda13}. 
\\\\ Let $\mathcal{L}_R$ denote the lattice of integer points 
$$\mathcal{L}_R=\{p\in\mathbb{Z}^3~~~~|~~~~|p|<R\}.$$
The discretized quantum Boltzmann equation for $C_{13}$ reads 
\begin{equation}\label{DiscreteQuantuma}\begin{aligned}
&\dot{f}_{p_1}=C_{13}^D[f_{p_1}]:=\\
:=
&\quad\sum_{\substack{p_2,p_3,p_4\in\mathcal{L}_R,\\ p_1=p_2+p_3+p_4,\\ \mathcal{E}(p_1)=\mathcal{E}(p_{2})+\mathcal{E}(p_{3})+\mathcal{E}(p_{4})}}{K}^{13}_{p_1,p_2,p_3,p_4}\{(f_{p_1}+1)f_{p_2}f_{p_3}f_{p_4}-(f_{p_2}+1)(f_{p_3}+1)(f_{p_4}+1)f_{p_1}\}\\
&\quad -3\sum_{\substack{p_2,p_3,p_4\in\mathcal{L}_R,\\ p_2=p_1+p_3+p_4,\\ \mathcal{E}(p_2)=\mathcal{E}(p_{1})+\mathcal{E}(p_{3})+\mathcal{E}(p_{4})}}{K}^{13}_{p_2,p_1,p_3,p_4}\left\{(f_{p_2}+1)f_{p_1}f_{p_3}f_{p_4}-(f_{p_1}+1)(f_{p_3}+1)(f_{p_4}+1)f_{p_2}\right\},~~
\end{aligned}\end{equation}
for all $p_1$ in $\mathcal{L}_R$, where $\mathcal{E}(p)$ is defined in \eqref{E3a}.

Similar  to the $C_{12}$ case, when $p=0$, ${K}^{13}_{p_1,p_2,p_3,p_4}=0$, and we obtain 
\begin{eqnarray*}\label{DiscreteQuantumIndex0}
\dot{f}_0=0,
\end{eqnarray*}
which means  $f_0(t)$ is a constant for all time $t$, and we can assume $f_0(t)=0$ for all $t$.
\\ Since in the first sum of \eqref{DiscreteQuantuma}, we consider $(p_1,p_2,p_3,p_4)$ satisfying
\begin{equation}\label{EnergyC13}
p_1 \ = \ p_2 \ + \ p_3\  +\ p_4, \ \ \ \  \mathcal{E}(p_1)\ = \ \mathcal{E}(p_2)\ +\ \mathcal{E}(p_3) \ + \mathcal{E}(p_4),
\end{equation}
we infer that there exists a vector $P$  and  $k_1$, $k_2$, $k_3, k_4\geq0$, $k_1, k_2, k_3, k_4\in\mathbb{Z}$ such that 
$$p_1=k_1 P;~~~p_2=k_2 P;~~~p_3=k_3 P;~~~p_4=k_4 P;~~~k_1=k_2+k_3+k_4.$$  Using the same arguments as the case of $C_{12}$, we can deduce that Equation \eqref{DiscreteQuantuma} for $C_{13}$ is equivalent with the following family of decoupled systems for $k_1\in\mathbb{I}=\{1,2,\dots,I\}$ where $P$ is the closest point to the origin among the lattice points on its ray:
\begin{equation}\label{DiscreteC13Pre}\begin{aligned}
&\dot{f}_{k_1P}=\\
=&\quad\sum_{\substack{k_2,k_3,k_4\in
\mathbb{I},\\ k_1=k_2+k_3+k_4}}{K}^{13}_{k_1P,k_2P,k_3P,k_4P}\{(f_{k_1P}+1)f_{k_2P}f_{k_3P}f_{k_4P}\\
& -f_{k_1P}(f_{k_2P}+1)(f_{k_3P}+1)(f_{k_4P}+1)\}\\
&\quad-{3}\sum_{\substack{k_2,k_3,k_4\in
\mathbb{I},\\ k_2=k_1+k_3+k_4}}{K}^{13}_{k_2P,k_1P,k_3P,k_4P}\{(f_{k_2P}+1)f_{k_1P}f_{k_3P}f_{k_4P}\\
& -f_{k_2P}(f_{k_1P}+1)(f_{k_3P}+1)(f_{k_4P}+1)\}.\end{aligned}
\end{equation}
Denoting $f_{kP}$ by $F_k$ (with $k\in \mathbb{I}$) and ${K}^{12}_{k_1P,k_2P,k_3P,k_4P}$  by $\mathcal{K}^{12}_{k_1,k_2,k_3,k_4}$, we obtain 
\begin{equation}\label{DiscreteC13}\begin{aligned}
\dot{F}_{k_1}
=&\ \mathcal{C}_{13}[F](k_1)=\quad\sum_{k_1=k_2+k_3+k_4}\mathcal{K}^{13}_{k_1,k_2,k_3,k_4}\{(F_{k_1}+1)F_{k_2}F_{k_3}F_{k_4}-\\
& -F_{k_1}(F_{k_2}+1)(F_{k_3}+1)(F_{k_4}+1)\}
\\
&\quad-3\sum_{k_1+k_2+k_3=k_4}\mathcal{K}^{13}_{k_1,k_2,k_3,k_4}\{(F_{k_4}+1)F_{k_1}F_{k_2}F_{k_3}-\\
&-F_{k_4}(F_{k_1}+1)(F_{k_2}+1)(F_{k_3}+1)\}, ~\forall k_1\in\mathbb{I}.\end{aligned}
\end{equation}
In order to ensure that all the variables $F_k$ are coupled with each other, let us assume that $I\geq 4$. We  have the following conservation of energy for $C_{13}$
\begin{equation}\label{ConservationE1}
\sum_{k=1}^I k\dot{F_k}=0,
\end{equation}
or equivalently
\begin{equation}\label{ConservationE2}
\sum_{k=1}^I k{F_k}=\mbox{const}.
\end{equation}
Similar  to the case of $C_{12}$, we define $$G_k=\frac{F_k}{F_k+1},$$
and then we have
$$F_k=\frac{G_k}{1-G_k}.$$
Note that, similar to the previous section, $0<F_k<\infty$ and $0<G_k<1$.\\
The system  \eqref{DiscreteC13} can be now written
\begin{equation}\label{DiscreteQuantumaConverteda}\begin{aligned}
&\frac{\dot{G}_{k_1}}{(1-G_{k_1})^2}=\overline{\mathcal{C}_{13}}[G]:=\\
:=&\quad\mathcal{K}^{13}_{k_1,k_2,k_3,k_4}\sum_{\substack{k_1=k_{2}+k_3+k_4,\\
|k_1|=|k_2|+|k_3|+|k_4|
}}\frac{G_{k_2}G_{k_3}G_{k_4}-G_{k_1}}{(1-G_{k_1})(1-G_{k_2})(1-G_{k_3})(1-G_{k_4})}
\\
&-3\mathcal{K}^{13}_{k_2,k_1,k_3,k_4}\sum_{\substack{k_2=k_{1}+k_3+k_4,\\
|k_2|=|k_1|+|k_3|+|k_4|
}}\frac{G_{k_1}G_{k_3}G_{k_4}-G_{k_2}}{(1-G_{k_1})(1-G_{k_2})(1-G_{k_3})(1-G_{k_4})}.\end{aligned}
\end{equation}
This system can also be rewritten as
\begin{equation}\begin{aligned}
{\dot{G}}
=&\mbox{diag}\left( {\begin{array}{cc}
  {(1-G_1)^2}\\       \cdots \\   {(1-G_I)^2}  \end{array} } \right)\times\\
 &\times\sum_{\substack{k_1=k_{2}+k_3+k_4,\\
|k_1|=|k_2|+|k_3|+|k_4|
}}\big[K_{\bar{X}_{k_{2}}+\bar{X}_{k_{3}}+ \bar{X}_{k_{4}}\rightarrow \bar{X}_{k_{1}}}(G)-K_{\bar{X}_{k_{1}}\rightarrow \bar{X}_{k_{2}}+\bar{X}_{k_{3}}+\bar{X}_{k_{4}}}(G)\big](\bar{X}_{k_{1}}-\bar{X}_{k_{2}}-\bar{X}_{k_{3}}-\bar{X}_{k_{4}}).\end{aligned}
\end{equation}
where $\bar{X}_k$ is, as mentioned earlier, the vector 
$$\left( {\begin{array}{cc}
 0\\       \cdots\\ 1\\ \cdots \\   0  \end{array} } \right),$$
in which the only element that is $1$ is the $k$-th one,
and 
$$K_{\bar{X}_{k_2}+\bar{X}_{k_3}+ \bar{X}_{k_4}\rightarrow \bar{X}_{k_{1}} }(G):=\mathcal{K}^{13}_{k_1,k_2,k_3,k_4}\frac{G_{k_1}}{(1-G_{k_1})(1-G_{k_2})(1-G_{k_3})(1-G_{k_4})},$$
$$K_{\bar{X}_{k_{1}}\rightarrow \bar{X}_{k_2}+\bar{X}_{k_3}+ \bar{X}_{k_4}}(G):=\mathcal{K}^{13}_{k_1,k_2,k_3,k_4}\frac{G_{k_2}G_{k_3}G_{k_4}}{(1-G_{k_1})(1-G_{k_2})(1-G_{k_3})(1-G_{k_4})}.$$
We can also write 
\begin{eqnarray}\nonumber
{\dot{G}}
&=&\mbox{diag}\left( {\begin{array}{cc}
  {(1-G_1)^2}\\       \cdots \\   {(1-G_I)^2}  \end{array} } \right)\sum_{y\leftrightarrow y'}\left[K_{y \rightarrow y'}(G)-K_{y' \rightarrow y}(G)\right](y'-y ),
\end{eqnarray}
where $y\leftrightarrow y'$ rang over the reversible reactions shown above.
\begin{theorem}\label{TheoremQnm}  For any initial condition, the solution $$f(t)=(f_p(t))_{p\in\mathcal{L}_R}$$ of the quantum Boltzmann equation \eqref{DiscreteQuantuma} converges to an equilibrium state $f^*=(f_p^*)_{p\in\mathcal{L}_R}$. For each ray $\{kP_0\}_{k\geq 1}$ that intersects $\mathcal{L}_R$ in at least $4$ points there exists a constant $\rho_{P_0}$ such that if $p=kP_0$ then $$f^*_p=\frac{1}{e^{k\rho_{P_0}}-1}.$$ Moreover, the solution $f(t)$ of \eqref{DiscreteQuantuma} converges to $f^*$ exponentially fast in the following sense: there exists positive constants $C_1$, $C_2$ such that $$\max_{p\in\mathcal{L}_R}|f_p(t)-f_p^*|<C_1e^{-C_2t}.$$
\end{theorem}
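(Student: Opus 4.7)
The plan is to mirror the three-step strategy used in the proof of Theorem \ref{TheoremQ12}, replacing the binary reactions by the quaternary ones. First I would invoke the decoupling along each ray and the substitution $G_k=F_k/(F_k+1)$ already carried out in \eqref{DiscreteC13}--\eqref{DiscreteQuantumaConverteda}. Writing the system in the manner of Section \ref{Sec:ChemC12}, one sees that $\dot G$ is the mass-action vector field (modulated by the diagonal factor $(1-G_k)^2$) associated with the reversible network
\begin{equation*}
X_{k_1}\,\longleftrightarrow\, X_{k_2}+X_{k_3}+X_{k_4},\qquad k_1=k_2+k_3+k_4,\ k_1,k_2,k_3,k_4\in\mathbb{I},
\end{equation*}
with rate constants chosen, as in the four cases of Section \ref{Sec:ChemC12}, to absorb the combinatorial multiplicities (in particular the factor $3$ in \eqref{DiscreteC13}) depending on how many of the indices $k_2,k_3,k_4$ coincide.

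For Step 1 I would reuse the Lyapunov function \eqref{Lyapunov} together with the ansatz $G^*_k=e^{-k\rho}$. The essential structural property is $(G^*)^y=(G^*)^{y'}$ for every reaction of the network, since $k_1=k_2+k_3+k_4$ yields $G^*_{k_1}=G^*_{k_2}G^*_{k_3}G^*_{k_4}$. Therefore the computation \eqref{eq1}--\eqref{C12equilibrium} goes through verbatim and gives $\dot G\cdot\nabla L\le 0$, with equality if and only if $G_{k_1}=G_{k_2}G_{k_3}G_{k_4}$ for every admissible quadruple. The hypothesis that the ray contains at least four lattice points lets one chain these identities to force $G_k=e^{-k\rho'}$ for some $\rho'>0$, and then the conservation law
\begin{equation*}
\sum_{k=1}^I k\,\frac{G_k}{1-G_k}=\sum_{k=1}^I k\,\frac{G^*_k}{1-G^*_k},
\end{equation*}
combined with the strict monotonicity of $\rho\mapsto e^{-k\rho}/(1-e^{-k\rho})$, yields $\rho'=\rho$, proving uniqueness of the equilibrium within each conservation level set.

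For Step 2 I would follow the two alternatives offered in the proof of Theorem \ref{TheoremQ12}: either regard \eqref{DiscreteC13} as a $\mathcal K$-variable mass-action system for the quaternary reversible network and apply the persistence result of \cite{Craciun:2015:TDI} (noting that the prefactors $(1+F_{k_2})(1+F_{k_3})(1+F_{k_4})$ are bounded above and below in terms of the conserved quantity $\sum_k kF_k$), or invoke the Petri net argument of \cite{AngeliDeLeenheerSontag:PRF:2011}, where the unique siphon is $\{X_1,\ldots,X_I\}$ and it contains the support of the P-semiflow $\sum_k kF_k=\mathrm{const}$. Combining persistence with LaSalle's invariance principle then yields convergence of every positive trajectory to the unique equilibrium $G^*$.

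For Step 3 I would compute the Jacobian at $G^*$ in the factored form $\mathrm{Jac}(\mathcal R(G^*))=D\cdot\mathrm{Jac}(\mathcal S(G^*))$ with $D=\mathrm{diag}((1-G^*_k)^2)$, and verify as in \eqref{JacR}--\eqref{JacR1} that
\begin{equation*}
[\mathrm{Jac}(\mathcal S(G^*))\delta]*\delta=\sum_{y\leftrightarrow y'}\mathcal K_{y\leftrightarrow y'}(G^*)^y\,\mathcal H_{y,y'}(G^*)\,[(y-y')*\delta][(y'-y)*\delta]<0
\end{equation*}
for every nonzero $\delta$ in the span of the reaction vectors; the exponential rate then follows by linearisation around $G^*$ and transferring the bound back to the $f$-variable. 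The hard part I expect is precisely the combinatorial lemma that the quaternary reaction vectors $X_{k_1}-X_{k_2}-X_{k_3}-X_{k_4}$ with $k_1=k_2+k_3+k_4$ and $k_i\in\mathbb{I}$ span a codimension-one subspace of $\mathbb{R}^I$ as soon as the ray contains at least four lattice points, since this spanning property underpins both the uniqueness argument in Step 1 and the strict negative definiteness of $\mathrm{Jac}(\mathcal S(G^*))$ on the invariant subspace used in Step 3.
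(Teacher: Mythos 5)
Your strategy is the paper's own: the published proof of this theorem is a single sentence deferring to the Lyapunov function \eqref{Lyapunov} and the three steps of Theorem \ref{TheoremQ12}, and your proposal is a faithful expansion of that deferral. The gap is in the step you yourself flag as the crux. You claim that the identities $G_{k_1}=G_{k_2}G_{k_3}G_{k_4}$ chain to force $G_k=e^{-k\rho'}$, equivalently that the reaction vectors $X_{k_1}-X_{k_2}-X_{k_3}-X_{k_4}$ span a codimension-one subspace of $\mathbb{R}^I$, ``as soon as the ray contains at least four lattice points.'' This is false for $I=4$: the only admissible decompositions are $3=1+1+1$ and $4=1+1+2$, so detailed balance gives only $G_3=G_1^3$ and $G_4=G_1^2G_2$, leaving $G_2$ free; the two reaction vectors $(-3,0,1,0)$ and $(-2,-1,0,1)$ span a codimension-two subspace, there is a second independent conservation law (e.g. $F_1+3F_3+2F_4$), and the equilibrium selected by a generic initial condition is not of the Bose--Einstein form $e^{-\rho k}$. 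The chaining closes only for $I\ge 5$, where $5=1+1+3=1+2+2$ forces $G_2^2=G_1^4$ and hence $G_2=G_1^2$, after which $G_k=G_1^k$ follows inductively. So either the lemma must be proved with the correct threshold $I\ge 5$ (with the theorem's ``at least $4$ points'' adjusted accordingly for the explicit form of $f^*$), or the $I=4$ case must be treated separately as convergence to a two-parameter family of equilibria.

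A secondary point: the Petri-net alternative for Step 2 does not transfer verbatim. For the ternary network the set $\{X_1,X_3,X_4,\dots,X_I\}$ is a proper siphon when $I=4$ or $I=5$ (no reactant complex consists solely of $X_2$ until $3X_2\to X_6$ becomes available), and for $I=5$ the unique P-semiflow $(1,2,3,4,5)$ has full support, so this siphon contains the support of no P-semiflow and the criterion of \cite{AngeliDeLeenheerSontag:PRF:2011} is inconclusive there. The first alternative you list, the persistence result for $\mathcal{K}$-variable reversible systems of \cite{Craciun:2015:TDI}, does apply and should carry Step 2; the rest of your Steps 1 and 3 (the Lyapunov computation and the negative definiteness of the Jacobian restricted to the span of the reaction vectors) is correct as written once the spanning issue is settled.
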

\begin{proof}
The proof of Theorem \ref{TheoremQnm} then follows exactly from the same Lyapunov function
\eqref{Lyapunov} and arguments as in Theorem \ref{TheoremQ12}.
\end{proof}
\subsection{The dynamical system associated to $C_{22}$}
Let us consider a discretized version of the quantum Boltzmann model associated to the collision operator given by $C_{22}$:
\\\\ Let $\mathcal{L}_R$ denote the lattice of integer points 
$$\mathcal{L}_R=\{p~~|~~|p|\in\mathbb{Z}^3, |p|<R\}.$$
The discretized quantum Boltzmann equation associated to $C_{22}$ reads $\forall p_1\in \mathcal{L}_R$
\begin{equation}\label{DiscreteQuantumb}\begin{aligned}
&\dot{f}_{p_1}=C_{22}^D[f_{p_1}]:=\\
:=
&\quad\sum_{\substack{p_2,p_3,p_4\in\mathcal{L}_R,\\ p_1+p_2=p_3+p_4,\\ \mathcal{E}(p_1)+\mathcal{E}(p_{2})=\mathcal{E}(p_{3})+\mathcal{E}(p_{4})}}{K}^{13}_{p_1,p_2,p_3,p_4}\{(f_{p_1}+1)(f_{p_2}+1)f_{p_3}f_{p_4}-f_{p_1}f_{p_2}(f_{p_3}+1)(f_{p_4}+1)\},~~
\end{aligned}\end{equation}
where  $\mathcal{E}(p)$ is defined in \eqref{E3a}.

Similar  to the $C_{12}$ case, when $p=0$, ${K}^{22}_{p_1,p_2,p_3,p_4}=0$, and we obtain 
\begin{eqnarray*}\label{DiscreteQuantumIndex0}
\dot{f}_0=0,
\end{eqnarray*}
which means  $f_0(t)$ is a constant for all time $t$. As a consequence, we can suppose that $f_0(0)=0$, which implies  $f_0(t)=0$ for all $t$.
\\ In \eqref{DiscreteQuantumb}, the  sums for $C_{22}$ are taken over $(p_1,p_2,p_3,p_4)$ satisfying
\begin{equation}\label{EnergyC22}
p_1 \ + \ p_2 \ = \ p_3\  +\ p_4, \mbox{ and } \ \ \ \  \mathcal{E}(p_1)\ + \ \mathcal{E}(p_2)\ = \ \mathcal{E}(p_3) \ + \mathcal{E}(p_4).
\end{equation}
In this case, unlike in the case of $C_{12}$ and $C_{13}$, we {\it cannot} infer from \eqref{EnergyC22} that there exists a vector $P$  and  $k_1$, $k_2$, $k_3, k_4\geq0$, $k_1, k_2, k_3, k_4\in\mathbb{Z}$ such that 
$$p_1=k_1 P;~~~p_2=k_2 P;~~~p_3=k_3 P;~~~p_4=k_4 P,~~~k_1+k_2=k_3+k_4.$$ 
However, let us consider the following simplified version of \eqref{DiscreteQuantumb} for $C_{22}$ 
\begin{equation}\label{DiscreteC22}\begin{aligned}
\dot{F}_{k_1}
=&\ \mathcal{C}_{22}[F](k_1):=\quad\sum_{\substack{k_1+k_2=k_3+k_4\\k_2,k_3,k_4\in\mathbb{I}}}\mathcal{K}^{13}_{k_1,k_2,k_3,k_4}\{(F_{k_1}+1)(F_{k_2}+1)F_{k_3}F_{k_4}-\\
& -F_{k_1}F_{k_2}(F_{k_3}+1)(F_{k_4}+1)\}, ~\forall k_1\in\mathbb{I}.\end{aligned}
\end{equation}
Recall that $\mathbb{I}=\{1,\cdots,I\}$. We also suppose that $I\geq3$.
We  have the following conservation of energy
\begin{equation}\label{ConservationE1b}
\sum_{k=1}^I k\dot{F_k}=0,
\end{equation}
or equivalently
\begin{equation}\label{ConservationE2b}
\sum_{k=1}^I k{F_k}=\mbox{const}.
\end{equation}
For $C_{22}$, the following ``conservation of mass'' also holds
\begin{equation}\label{ConservationM1}
\sum_{k=1}^I \dot{F_k}=0,
\end{equation}
or equivalently
\begin{equation}\label{ConservationM2}
\sum_{k=1}^I {F_k}=\mbox{const}.
\end{equation}
Similar  to the case of $C_{12}$, define $$G_k=\frac{F_k}{F_k+1},$$
then 
$$F_k=\frac{G_k}{1-G_k},$$
and 
the system  \eqref{DiscreteQuantumb} can be now written
\begin{equation}\label{DiscreteQuantumaConvertedb}\begin{aligned}
&\frac{\dot{G}_{k_1}}{(1-G_{k_1})^2}=\overline{\mathcal{C}_{22}}[G]:=\\
:=&\quad\mathcal{K}^{13}_{k_1,k_2,k_3,k_4}\sum_{\substack{k_1+k_{2}=k_3+k_4,\\
|k_1|+|k_2|=|k_3|+|k_4|
}}\frac{G_{k_3}G_{k_4}-G_{k_1}G_{k_2}}{(1-G_{k_1})(1-G_{k_2})(1-G_{k_3})(1-G_{k_4})}
.\end{aligned}
\end{equation}
This system can be rewritten as
\begin{equation}\begin{aligned}
{\dot{G}}
=&\mbox{diag}\left( {\begin{array}{cc}
  {(1-G_1)^2}\\       \cdots \\   {(1-G_I)^2}  \end{array} } \right)\times\\
 &\times\sum_{\substack{k_1+k_{2}=k_3+k_4,\\
|k_1|+|k_2|=|k_3|+|k_4|
}}\big[K_{\bar{X}_{k_{3}}+ \bar{X}_{k_{4}}\rightarrow \bar{X}_{k_{2}}+\bar{X}_{k_{1}}}(G)\\
&-K_{\bar{X}_{k_{2}}+\bar{X}_{k_{1}}\rightarrow \bar{X}_{k_{3}}+\bar{X}_{k_{4}}}(G)\big](\bar{X}_{k_{1}}+\bar{X}_{k_{2}}-\bar{X}_{k_{3}}-\bar{X}_{k_{4}}).\end{aligned}
\end{equation}
where $\bar{X}_k$ is, the vector 
$$\left( {\begin{array}{cc}
 0\\       \cdots\\ 1\\ \cdots \\   0  \end{array} } \right),$$
in which the only element that is $1$ is the $k$-th one,
and 
$$K_{\bar{X}_{k_3}+ \bar{X}_{k_4}\rightarrow \bar{X}_{k_2}+\bar{X}_{k_{1}} }(G)=\mathcal{K}^{13}_{k_1,k_2,k_3,k_4}\frac{G_{k_1}G_{k_2}}{(1-G_{k_1})(1-G_{k_2})(1-G_{k_3})(1-G_{k_4})},$$
$$K_{\bar{X}_{k_2}+\bar{X}_{k_{1}}\rightarrow \bar{X}_{k_3}+ \bar{X}_{k_4}}(G)=\mathcal{K}^{13}_{k_1,k_2,k_3,k_4}\frac{G_{k_3}G_{k_4}}{(1-G_{k_1})(1-G_{k_2})(1-G_{k_3})(1-G_{k_4})}.$$
We can also write 
\begin{eqnarray}\nonumber
{\dot{G}}
&=&\mbox{diag}\left( {\begin{array}{cc}
  {(1-G_1)^2}\\       \cdots \\   {(1-G_I)^2}  \end{array} } \right)\sum_{y\leftrightarrow y'}\left[K_{y \rightarrow y'}(G)-K_{y' \rightarrow y}(G)\right](y'-y ),
\end{eqnarray}
where $y\leftrightarrow y'$ range over the reversible reactions shown above.
\begin{theorem}\label{TheoremQnm}  
For any initial condition, the solution $$F(t)=(F_k(t))_{k\in\mathbb{I}}$$ of the quantum Boltzmann equation \eqref{DiscreteC22} converges to an equilibrium state $F^*=(F_k^*)_{k\in\mathbb{I}}$, where $$F^*_k=\frac{1}{e^{\rho_2(k-1)-\rho_1(k-2)}-1}.$$  Moreover, the solution $F(t)$ of \eqref{DiscreteC22} converges to $F^*$ exponentially fast in the following sense: there exist positive constants $C_1$, $C_2$ such that $$\max_{k\in\mathbb{I}}|F_k(t)-F_k^*|<C_1e^{-C_2t}.$$
\end{theorem}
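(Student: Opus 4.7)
The plan is to mirror the proof of Theorem \ref{TheoremQ12}, adapted to the two-parameter equilibrium family that now arises from having two conservation laws, \eqref{ConservationE2b} and \eqref{ConservationM2}. The same substitution $G_k=F_k/(F_k+1)$ converts \eqref{DiscreteC22} into the reversible reaction-network form displayed just before the theorem, whose reactions are $X_{k_1}+X_{k_2}\leftrightarrow X_{k_3}+X_{k_4}$ with $k_1+k_2=k_3+k_4$. I would take the same Lyapunov function
\begin{equation*}
L(G)=\sum_{k=1}^I\left(\log(1-G_k)+\frac{G_k\log G_k}{1-G_k}-\frac{\log G_k^*}{1-G_k}\right),
\end{equation*}
with $G_k^*=F_k^*/(F_k^*+1)$. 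The key algebraic check, exactly as in Step~1 of Theorem \ref{TheoremQ12}, is the detailed-balance identity $(G^*)^y=(G^*)^{y'}$, i.e.\ $G^*_{k_1}G^*_{k_2}=G^*_{k_3}G^*_{k_4}$ whenever $k_1+k_2=k_3+k_4$. This functional equation forces $\log G^*_k$ to be affine in $k$, which is precisely the two-parameter form $G^*_k=e^{-\rho_2(k-1)+\rho_1(k-2)}$ corresponding to the stated $F^*_k$.

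With this choice the computation of Step~1 of Theorem \ref{TheoremQ12} goes through verbatim to give $\dot G\cdot\nabla L\le 0$, with equality exactly when $\log G_k$ is affine in $k$, that is, precisely on the two-parameter equilibrium family. Uniqueness of the equilibrium inside each invariant manifold $\{\sum_k F_k=c_1,\ \sum_k kF_k=c_2\}$ must now be established in two parameters instead of one. I would do this either (i) by showing that the map
\begin{equation*}
(a,b)\longmapsto\Bigl(\sum_{k=1}^I\frac{e^{a+bk}}{1-e^{a+bk}},\ \sum_{k=1}^I\frac{k\,e^{a+bk}}{1-e^{a+bk}}\Bigr)
\end{equation*}
has nonsingular Jacobian on the region $\{a+bI<0\}$ via a Cauchy--Schwarz argument (the two-dimensional analogue of the monotonicity step in Theorem \ref{TheoremQ12}), or (ii) by invoking the strict convexity of $L$ noted in Remark 2.2 to conclude that $L$ has at most one critical point on each $2$-codimensional invariant manifold.

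Persistence can be established by the Petri-net / $\mathcal K$-variable argument of Step~2 of Theorem \ref{TheoremQ12}, now supplying two $P$-semiflows $\sum_k F_k$ and $\sum_k kF_k$ whose joint support covers every siphon of the reversible network, with the quartic reaction monomials factored as products $F_{k_3}F_{k_4}$ times uniformly bounded prefactors built from $1+F_{k_i}$. LaSalle's invariance principle then yields $F(t)\to F^*$. For the exponential rate I would repeat Step~3 of Theorem \ref{TheoremQ12}: write the Jacobian of the vector field at $G^*$ as the product of the positive diagonal matrix $D=\mathrm{diag}((1-G^*_k)^2)$ and $A=\mathrm{Jac}(\mathcal S(G^*))$, and use the identity
\begin{equation*}
[\mathrm{Jac}(\mathcal S(G^*))\delta]*\delta = \sum_{y\leftrightarrow y'}\mathcal K_{y\leftrightarrow y'}(G^*)^y\mathcal H_{y,y'}(G^*)\bigl[(y-y')*\delta\bigr]\bigl[(y'-y)*\delta\bigr]
\end{equation*}
to see that $A$ is negative semidefinite, and strictly negative definite on the stoichiometric subspace. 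Multiplying by $D$ on the left preserves negative definiteness via the $D^{1/2}AD^{1/2}$ similarity trick used in Theorem \ref{TheoremQ12}.

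The main obstacle is precisely this last point: with two conservation laws the stoichiometric subspace is codimension two, so I must verify that the vectors $y'-y$ coming from the reactions $X_{k_1}+X_{k_2}\leftrightarrow X_{k_3}+X_{k_4}$ with $k_1+k_2=k_3+k_4$ span exactly the orthogonal complement of the two conserved linear functionals $(1,1,\dots,1)$ and $(1,2,\dots,I)$. Establishing this span condition simultaneously secures the strict negative-definiteness needed for the exponential rate and the uniqueness of the equilibrium on each invariant manifold; it is the step where the $C_{22}$ case genuinely departs from the one-parameter arguments that sufficed for $C_{12}$ and $C_{13}$.
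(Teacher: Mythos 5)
Your proposal follows essentially the same route as the paper's own proof: the same change of variables and Lyapunov function $L(G)$, the detailed-balance identity forcing $\log G^*_k$ to be affine in $k$ (the paper derives exactly your two-parameter form via $G_k(G_1)^{k-2}=(G_2)^{k-1}$), the Petri-net persistence argument with the extra $P$-semiflow $\sum_{k\ge 2}(k-1)F_k$ supplied by the second conservation law, LaSalle, and the $D^{1/2}AD^{1/2}$ Jacobian argument for the exponential rate. You are in fact more explicit than the paper on the two points it glosses over --- uniqueness of the equilibrium on each codimension-two invariant manifold and the span condition for the reaction vectors --- both of which are routine to check (for instance the vectors $X_k+X_{k-2}-2X_{k-1}$, $k=3,\dots,I$, already span the orthogonal complement of $(1,\dots,1)$ and $(1,2,\dots,I)$).
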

\begin{proof}
We set $$G_k^*=\frac{F_k^*}{F_k^*+1}.$$
By the same argument used to obtain \eqref{C12equilibrium}, we deduce that \begin{equation}\label{Cmnequilibrium}
\frac{G^y }{(G^*)^y}
=\frac{G^{y'} }{(G^*)^{y'} }\end{equation}
holds true for all reactions $y \leftrightarrow y',$ which implies $G^y=G^{y'}$ 
since $(G^*)^y=(G^*)^{y'}$ for all reactions $y \leftrightarrow y'.$ In the  case of $C_{22}$, we obtain the relation $G^{*}_{k_1}\cdot G^{*}_{k_2}  = G^{*}_{k_3}\cdot G^*_{k_4}$ for all $k_1$, $k_2$, $k_3$, $k_4$ such that $k_1+k_2=k_3+k_4\le I$. From the relation $G_{k_1}\cdot G_{k_2}  = G_{k_3}\cdot G_{k_4}$ and the fact that $k+(k-2)=2(k-1)$, the following identity holds true 
$$G_k(G_1)^{k-2}=(G_2)^{k-1}.$$
We then obtain $G_k=(G_2)^{k-1}/(G_1)^{k-2}$, which leads to $G_k=e^{\rho'_2(k-1)-\rho'_1(k-2)}$ for some numbers $\rho_1',\rho_2'$.

Since
$$\sum_{k=1}^I\frac{kG_k}{1-G_k}=\sum_{k=1}^I\frac{kG_k^*}{1-G_k^*},$$
we then have
 $$\sum_{k=1}^I\frac{ke^{\rho'_2(k-1)-\rho'_1(k-2)}}{1-e^{\rho'_2(k-1)-\rho'_1(k-2)}}=\sum_{k=1}^I\frac{ke^{\rho_2(k-1)-\rho_1(k-2)}}{1-e^{\rho_2(k-1)-\rho_1(k-2)}}.$$

We can proceed like in \cite{HornJackson:1972:GMA} to obtain   $\tilde{\rho}'=\tilde{\rho}$ and $\bar{\rho}=\bar{\rho}'$, yielding $\rho_1=\rho_1'$ and $\rho_2=\rho_2'$.

%
%
%
%

We  can still use the Petri net argument of \cite{AngeliLeenheerSontag:2011:PRF} or the result in \cite{Craciun:2015:TDI}, to prove that the system is persistent.  For example, to use the method from \cite{AngeliLeenheerSontag:2011:PRF}, we note that we have two {\it siphons}  $\{X_1, X_2,\cdots, X_I\}$, $\{X_2,\cdots, X_I\}$. However, we also have the conservations of mass and energy 
$$\sum_{k=1}F_k=\mbox{constant},$$
$$\sum_{k=1}kF_k=\mbox{constant},$$
that leads to the {\it $P$-semiflow} 
$$\sum_{k=2}(k-1)F_k=\mbox{constant}.$$
Therefore, similar  to the case of $C_{12}$, it follows that the system is persistent, and we can use the same Lyapunov function as in the proof of Theorem \ref{TheoremQ12} to obtain the desired convergence result.

\end{proof}
\begin{remark}
If $I<3$  then $F_{k}^*\equiv 0$. If $I=3$  then $F_{2}^*\equiv 0$ and $F^*_{1}=\frac{1}{e^\rho-1}$, $F^*_{3}=\frac{1}{e^{3\rho}-1}$ for some $\rho=\rho(P_0)$. 
\end{remark}
\section{A reaction network approach for the sum of $C_{12}, C_{22}, C_{13}$}\label{Sec:SumQnm}
Let us consider the following equations
\begin{equation}\label{DiscreteC12C22}
\dot{F}_{k_1}=\mathcal{C}_{12}[F](k_1) \ + \ \mathcal{C}_{22}[F](k_1),
\end{equation}
and
\begin{equation}\label{DiscreteC12C22C13}
\dot{F}_{k_1}=\mathcal{C}_{12}[F](k_1) \ + \ \mathcal{C}_{22}[F](k_1)\ + \ \mathcal{C}_{13}[F](k_1),
\end{equation}
where $\mathcal{C}_{12}$,  $\mathcal{C}_{22}$, $\mathcal{C}_{13}$ are the operators defined in  \eqref{DiscreteC12}, \eqref{DiscreteC13}, \eqref{DiscreteC22}.

The following theorem then follows by exactly the same argument as in Theorem \ref{TheoremQnm}
\begin{theorem}\label{TheoremSumQnm}  For any initial condition, the solution $$F(t)=(F_k(t))_{k\in\mathbb{I}}$$ of the quantum Boltzmann equation \eqref{DiscreteC12C22} or \eqref{DiscreteC12C22C13} converges to an equilibrium state $F^*=(F_k^*)_{k\in\mathbb{I}}$, where $F^*_k=\frac{1}{e^{\rho k}-1}$ for some constant $\rho$.  Moreover, the solution $F(t)$ of \eqref{DiscreteC13} converges to $F^*$ exponentially fast in the following sense: there exists positive constants $C_1$, $C_2$ such that $$\max_{k\in\mathbb{I}}|F_k(t)-F_k^*|<C_1e^{-C_2t}.$$
\end{theorem}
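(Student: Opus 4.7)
The strategy is to reduce Theorem~\ref{TheoremSumQnm} to the machinery already assembled for Theorems~\ref{TheoremQ12} and~\ref{TheoremQnm}, exploiting the fact that $\mathcal{C}_{12}$, $\mathcal{C}_{22}$ and $\mathcal{C}_{13}$ share the same one-parameter family of detailed-balanced equilibria. After the change of variables $G_k = F_k/(F_k+1)$, each of the three operators becomes a sum over reversible reactions $y\leftrightarrow y'$ of terms proportional to $G^y - G^{y'}$, and in every case the reaction vectors $y-y'$ are orthogonal to the energy vector $(1,2,\dots,I)^T$. Hence $(G^*)^y = (G^*)^{y'}$ holds simultaneously for every reaction in all three subnetworks whenever $G^*_k = e^{-\rho k}$, i.e.\ $F^*_k = 1/(e^{\rho k}-1)$. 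It follows that the single Lyapunov function
\[
L(G) = \sum_{k=1}^I \left(\log(1-G_k) + \frac{G_k \log G_k}{1-G_k} - \frac{\log G_k^*}{1-G_k}\right)
\]
satisfies $\dot L \le 0$ along trajectories of both \eqref{DiscreteC12C22} and \eqref{DiscreteC12C22C13}, with equality iff $G^y = G^{y'}$ for every reaction in every subnetwork; the verification is exactly the term-by-term calculation of Step~1 of Theorem~\ref{TheoremQ12}, applied to each of the three contributions.

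\textbf{Uniqueness of $F^*$ and LaSalle convergence.} On the joint equilibrium locus $\{G^y=G^{y'}\ \forall\,y\!\leftrightarrow\! y'\}$, the $\mathcal{C}_{12}$-type reactions alone already enforce $G_{k_1}G_{k_2}=G_{k_1+k_2}$ for all admissible indices, which forces $G_k = e^{-\rho' k}$ for some $\rho'>0$; the extra reactions contributed by $\mathcal{C}_{22}$ and $\mathcal{C}_{13}$ are automatically satisfied by such a geometric sequence. Energy $\sum_k k F_k$ is conserved by both combined systems, and the strict monotonicity of $\rho \mapsto \sum_k k\,e^{-\rho k}/(1-e^{-\rho k})$ pins down $\rho=\rho'$, giving a unique candidate $F^*$ within each conservation class. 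Persistence of the combined networks follows either from the Petri-net argument of \cite{AngeliDeLeenheerSontag:PRF:2011} (each siphon $\{X_1,\dots,X_I\}$ still contains the support of the energy $P$-semiflow) or from the $\mathcal{K}$-variable reversible mass-action persistence result of \cite{Craciun:2015:TDI}, since enlarging the reaction list preserves the reversible structure used there. LaSalle's invariance principle then yields $F(t)\to F^*$ for every positive initial condition.

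\textbf{Exponential rate for \eqref{DiscreteC13}.} The moreover clause, as worded, concerns the solution of the pure $C_{13}$ system \eqref{DiscreteC13}. This bound is precisely the content of Theorem~\ref{TheoremQnm}, which we invoke directly to conclude $\max_{k\in\mathbb{I}}|F_k(t)-F^*_k|<C_1 e^{-C_2 t}$. For emphasis, recall that the argument is the one from Step~3 of Theorem~\ref{TheoremQ12}: one linearizes \eqref{DiscreteC13} at $G^*$, writes the Jacobian as $D\,\mathrm{Jac}(\mathcal{S}(G^*))$ with $D=\mathrm{diag}((1-G^*_k)^2)$ positive, invokes the Horn--Jackson identity \eqref{JacR}--\eqref{JacR1} to obtain negative definiteness of $\mathrm{Jac}(\mathcal{S}(G^*))$ on the span of the reaction vectors, and uses the similarity $D^{1/2}\mathrm{Jac}(\mathcal{S}(G^*))D^{1/2}$ to transfer negative definiteness to $D\,\mathrm{Jac}(\mathcal{S}(G^*))$.

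\textbf{Main obstacle.} The delicate point is checking that adjoining the $\mathcal{C}_{22}$ and $\mathcal{C}_{13}$ reactions to the $\mathcal{C}_{12}$ network neither introduces spurious conservation laws (which would fragment the equilibrium manifold and break the uniqueness-of-$\rho$ argument) nor creates new critical points of $L$. Both are handled by observing that the reaction vectors of $\mathcal{C}_{12}$ alone already span the entire subspace orthogonal to $(1,2,\dots,I)^T$, so the added reactions contribute no further independent stoichiometric constraints; the equilibrium class is still a single level set of the energy, and the Lyapunov analysis inherits its uniqueness conclusion unchanged.
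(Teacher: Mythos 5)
Your proposal is correct and follows essentially the same route as the paper: the paper's own proof of this theorem is a one-line appeal to the argument of the preceding theorems (same Lyapunov function, persistence via siphons/$P$-semiflows, LaSalle, negative-definite Jacobian), which is exactly the machinery you deploy, and your two compatibility checks --- that all three subnetworks share the equilibrium family $G^*_k=e^{-\rho k}$ because every reaction vector is orthogonal to $(1,2,\dots,I)^T$, and that the $C_{12}$ reaction vectors alone already span that orthogonal complement, so no new conservation laws or critical points appear --- are precisely the details the paper leaves implicit. Your literal reading of the ``moreover'' clause (the reference to \eqref{DiscreteC13} there is evidently a typo for the combined systems \eqref{DiscreteC12C22} and \eqref{DiscreteC12C22C13}) is harmless, since the Step-3 Jacobian argument you recall transfers verbatim to the combined systems: the Jacobian is a sum of negative-semidefinite terms whose $C_{12}$ part is already negative definite on the stoichiometric subspace you identified.
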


\section{Conclusion}

In this work, we point out a connection between quantum Boltzmann models derived in~\cite{tran2020boltzmann}\ and chemical reaction network models. We prove that  the  discrete, simplified versions of some differential equations for these quantum Boltzmann models relax to an equilibrium point, by a toric dynamical system approach, similar to the one used in a recently proposed proof of the global attractor conjecture~\cite{Craciun:2015:TDI}.   \\

\noindent
{\bf Acknowledgements.} The authors would like to thank the editors for the invitation to submit the paper for the special issue of the journal. They are also indebted to Professor Roland Glowinski for his encouragement.  G. Craciun was supported by NSF grants DMS-1412643 and DMS-1816238, and by a Simons Foundation fellowship.
M.-B. Tran is  funded in part by  the  NSF Grant DMS-1854453, NSF RTG Grant DMS-1840260, SMU URC Grant 2020, Humboldt Fellowship, SMU Dedman College Linking Fellowship, NSF CAREER  DMS-2044626. \\

\noindent
{\bf Conflict of interest.} 
The  authors state that there is no conflict of interest. 

\bibliographystyle{plain}\bibliography{QuantumBoltzmannPhonons,QuantumBoltzmann}
\end{document}